\providecommand\@enum@widestlabel{7}
\newtheorem{lemma}{Lemma}[section]
\newtheorem{theorem}[lemma]{Theorem}
\newtheorem*{theorem*}{Theorem}
\newtheorem{corollary}[lemma]{Corollary}
\newtheorem{proposition}[lemma]{Proposition}
\newtheorem{conjecture}[lemma]{Conjecture}
\theoremstyle{plain}
\newtheorem*{theoremA}{Theorem A}
\newtheorem*{theoremB}{Theorem B}
\theoremstyle{definition}
\newtheorem{definition}[lemma]{Definition}
\newtheorem{remark}[lemma]{Remark}
\theoremstyle{remark}
\renewcommand{\theequation}%
{\arabic{section}.\arabic{lemma}.\arabic{equation}}
\newcommand{\CC}{\ensuremath{\mathbb{C}}}
\newcommand{\PP}{\ensuremath{\mathbb{P}}}
\newcommand{\QQ}{\ensuremath{\mathbb{Q}}}
\newcommand{\ZZ}{\ensuremath{\mathbb{Z}}}
\newcommand{\sI}{\ensuremath{\kern -1pt \mathscr{I}\kern -2pt}}
\newcommand{\sJ}{\ensuremath{\kern -2pt \mathscr{J}\kern -2pt}}
\newcommand{\sO}{\ensuremath{\mathscr{O}}}
\newcommand{\sOX}{\ensuremath{\mathscr{O}^{}_{\! X}}}
\renewcommand{\geq}{\geqslant}
\renewcommand{\leq}{\leqslant}
\DeclareMathOperator{\mult}{mult}
\DeclareMathOperator{\Pic}{Pic}
\DeclareMathOperator{\Sym}{Sym}
\DeclareMathOperator{\Cliff}{Cliff}
\newcommand{\equ}{\ensuremath{\,=\,}}
\newcommand{\dgeq}{\ensuremath{\,\geq\,}}
\newcommand{\dleq}{\ensuremath{\,\leq\,}}
\newcommand{\deq}{\ensuremath{\stackrel{\textrm{def}}{=}}}
\begin{document}

\title{Higher syzygies on surfaces with numerically trivial canonical bundle}
\author{Daniele Agostini}
\author{Alex K\"uronya}
\author{Victor Lozovanu}
\address{Daniele Agostini, Humboldt Universit\"at zu Berlin, Institut f\"ur Mathematik, Unter den Linden 6, D-10099 Berlin, Germany}
\email{\tt daniele.agostini@math.hu-berlin.de}

\address{Alex K\"uronya, Johann-Wolfgang-Goethe Universit\"at Frankfurt, Institut f\"ur Mathematik, Robert-Mayer-Stra\ss e 6-10., D-60325	Frankfurt am Main, Germany}
\address{Budapest University of Technology and Economics, Department of Algebra, Egry J\'ozsef u. 1., H-1111 Budapest, Hungary}
\email{{\tt kuronya@math.uni-frankfurt.de}}

\address{Victor Lozovanu, Institut f\"ur Algebraische Geometrie, Gottfried-Wilhelm-Leibniz-Universit\"at Hannover,
	Welfengarten 1, D-30167 Hannover, Germany}
\email{\tt victor.lozovanu@gmail.com}

\maketitle

\section{Introduction}

The aim of this paper is to understand higher syzygies of polarized surfaces with trivial canonical bundles. More concretely, for a polarized surface 
$(X,L)$ with $K_X=0$ over the complex numbers,  we establish a tight connection between property $(N_p)$ for $(X,L)$ and the (non)existence of certain forbidden subvarieties of $X$. 
Our results unify  the main statement of \cite{KL15}, and the classical theorem of Saint-Donat \cite{SD} on complete linear series on K3 surfaces, which  hints at 
a beautiful interplay between the  local and global geometry of the surface.

 
In the current context syzygies of projective embeddings have entered the stage after the seminal work of Green \cite{G84}, and have become an active area of 
research ever since. From a geometric point of view, it has been observed  that projective normality of an embedding and the conditon of the homogeneous ideal of the embedding being 
generated by quadrics can be seen as  the first elements of   an increasing sequence of positivity  properties, namely the cases $p=0$ and $p=1$ for property $(N_p)$. 
The arising conjectures of Green and Lazarsfeld energized the subject, which then has become  an important topic in algebraic geometry  to this day. For some of the highlights we refer to 
\cites{EL1,V02,V05} and the book \cite{AN10}. The last years have witnessed a new line of research which focuses on the  asymptotic behaviour of Koszul cohomology groups 
(see \cite{EL16} and the references therein). 

Our guiding principle is that in the case of a polarized surface $(X,L)$ with $K_X\equiv 0$ and $(L^2)\gg 0$, one can  expect an equivalence between  the vanishing 
of certain Koszul cohomology groups of low degree for $(X,L)$, and the non-existence of elliptic curves of low $L$-degree on $X$ passing through a very general point of $X$. 
In addition, we will link the equivalence  to the local positivity of $(X,L)$ at a very general point, which we will measure in terms of Seshadri constants. 

With this in mind, the main result of this article is the following. 

\begin{theoremA}\label{thm:A}
 Let $p$ be a natural number, $X$ a smooth projective surface over the complex numbers with trivial canonical bundle, and $L$ an ample line bundle on $X$ with $(L^2)\geq 5\cdot (p+2)^2$. 
 Then the following are equivalent.
\begin{enumerate}
\item $(X,L)$ satisfies property $(N_p)$.
\item There is no smooth elliptic curve $F\subseteq X$ (through a very general point of $X$) with $1\leq (L\cdot F)\leq p+2$ and $(F^2)=0$.
\item The Seshadri constant at a very general point $x$ satisfies $\epsilon(L;x)>p+2$.
\end{enumerate}
\end{theoremA}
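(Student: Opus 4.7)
The plan is to prove Theorem~A through the cycle $(3) \Rightarrow (1) \Rightarrow (2) \Rightarrow (3)$, with the geometric equivalence $(2) \Leftrightarrow (3)$ handled first since it does not involve Koszul cohomology. The hypothesis $K_X = 0$ will play a double role: it reduces the adjunction formula to $C^2 = 2p_a(C)-2$, so that curves of arithmetic genus one are numerically characterised by $C^2=0$, and it eliminates the canonical twist in every Kodaira-type vanishing used in the syzygy calculations. The quadratic bound $(L^2)\geq 5(p+2)^2$ is what rules out exceptional configurations in the local-to-global passage.

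\emph{The geometric equivalence $(2) \Leftrightarrow (3)$.} Assume $\epsilon(L;x) \leq p+2$ at very general $x$. By standard Seshadri theory there is an effective curve $C$ through $x$ with $(L \cdot C) \leq (p+2)\cdot\mult_x C$, varying in a family parametrised by the Seshadri-exceptional locus. Adjunction combined with the Hodge index theorem and the lower bound on $(L^2)$ forces $C^2 \leq 0$; since rational $(-2)$-curves do not move on a surface with $K_X = 0$, the only remaining possibility is $p_a(C) = 1$ with $C^2 = 0$. Extracting a smooth elliptic component $F$ produces the desired pencil through the very general point. The converse is immediate from the definition of $\epsilon(L;x)$.

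\emph{The obstruction $(1) \Rightarrow (2)$.} Argue contrapositively. If $F$ as in $(2)$ exists, then $L|_F$ is a line bundle of degree at most $p+2$ on a smooth elliptic curve, and therefore fails property $(N_p)$ by the sharp form of Green's theorem for curves of genus one, which requires degree $\geq p+3$. To transfer this failure to $X$, combine the restriction sequence
\[
0 \lra \sO_X(-F) \otimes L^k \lra L^k \lra (L|_F)^k \lra 0
\]
with Kawamata--Viehweg vanishing for the big and nef class $kL - F$ (which holds for $k \geq 1$ thanks to $K_X = 0$ and the bound on $(L^2)$). A standard diagram chase in the Koszul complex then lifts the obstruction from $F$ to $X$, contradicting $(N_p)$.

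\emph{The main step $(3) \Rightarrow (1)$: the principal obstacle.} This is the technical heart of the theorem. The plan is to extend the infinitesimal Newton--Okounkov body approach of \cite{KL15}, developed for abelian surfaces, to arbitrary smooth surfaces with $K_X = 0$. Let $\pi \colon X' \to X$ be the blow-up of a very general point $x$ with exceptional divisor $E$, and consider the Okounkov body of $\pi^*L$ with respect to an infinitesimal flag based on $E$. The assumption $\epsilon(L;x) > p+2$ translates into the presence of a standard simplex of side strictly greater than $p+2$ inside this body. Following the Lazarsfeld--Mustata circle of ideas, this containment controls the surjectivity of the multiplication maps feeding into the Koszul complex, and combined with the vanishing of higher cohomology of twisted kernel bundles $M_L$ on $X'$ (again a consequence of Kodaira vanishing made clean by $K_X = 0$), it forces the relevant Koszul groups $K_{p,1}(X,L)$ to vanish. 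The main obstacle is the lack of homogeneity: without the translation symmetries of the abelian case, the analysis of the infinitesimal Okounkov body at a very general point is considerably more delicate, and one must argue uniformly over K3 and abelian surfaces, using the bound $(L^2) \geq 5(p+2)^2$ to exclude every other source of obstruction.
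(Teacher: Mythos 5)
Your proposal does not actually prove the theorem: the implication $(3)\Rightarrow(1)$, which you correctly identify as the heart of the matter, is left as a programme rather than an argument. You propose to extend the infinitesimal Newton--Okounkov body method of \cite{KL15} from abelian surfaces to all surfaces with $K_X=0$, and then concede that without translation symmetry the analysis is ``considerably more delicate'' --- but that delicacy is precisely the missing proof, and it is not clear the method survives the loss of homogeneity (the argument in \cite{KL15} leans on structures special to abelian surfaces). The paper takes an entirely different route for the K3 case, and this is the key idea your proposal lacks: by Green's Lefschetz theorem (Lemma~\ref{lem:Koszul}) one has $K_{p,q}(X,L)\cong K_{p,q}(C,\omega_C)$ for a smooth $C\in|L|$, so by Aprodu--Farkas \cite{AF11} (Corollary~\ref{cor:AF}) failure of $(N_p)$ forces $\Cliff(C)\leq p$, hence $\gon(C)\leq p+2$ since $L$ ample rules out the case $\Cliff(C)=\gon(C)-3$; Serrano's extension theorem \cite{Se87} then extends the degree-$\leq p+2$ pencil on $C$ to an elliptic fibration of $X$ whose fibre $F$ satisfies $(L\cdot F)\leq p+2$ (Theorem~\ref{thm:gle}, with the borderline genus-two configuration eliminated separately by Theorem~\ref{thm:one direction}). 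Only the abelian case is delegated to \cite{KL15}*{Theorem 1.1}. Without this reduction to Green's conjecture on curves, or a genuine substitute, $(2)\Rightarrow(1)$ is unproved.

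Two secondary gaps. First, in $(1)\Rightarrow(2)$ your ``standard diagram chase'' lifting the obstruction from $F$ to $X$ conceals the real difficulty: $F$ is not a member of $|L|$, so the Lefschetz mechanism does not apply directly, and the paper's transfer statement (Proposition~\ref{lem:surfacetocurve}) requires the nontrivial vanishing $H^0(X,\wedge^{i}M_L\otimes(K_X+F))=0$, verified via slope stability of the syzygy bundle $M_L$; alternatively, Proposition~\ref{prop:easier direction} avoids restriction altogether by bounding the linear span of $F$ ($\dim\Lambda\leq(L\cdot F)-1\leq p+1$) and invoking the Gruson--Lazarsfeld--Peskine lemma \cite{GLP} to force $H^1(F,\sO_F)=0$. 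Second, in $(2)\Leftrightarrow(3)$ the Hodge index theorem alone does not dispose of a Seshadri-exceptional curve with $\mult_x C=m\geq 2$; the paper needs the lower bound $(C^2)\geq m^2-m+2$ for curves moving through a very general point of a non-uniruled surface \cite{KSS}, and it is this bound, combined with $(L^2)>\tfrac{8}{7}(p+2)^2$, that closes the case $m\geq 2$ in Proposition~\ref{prop:seshadri}.
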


\begin{remark}
The parentheses in condition (2) amount to the fact that if there is a single smooth elliptic curve with $(L\cdot F) \leq p+2$, then there is one passing through a very general point with the same property. This is clear for abelian surfaces since we can move the curve with translations, and for K3 surfaces it is a classic application of Riemann-Roch: see for example \cite{beauville}*{Proposition VIII.13}.
\end{remark}

We will prove Theorem A in Section 3, where we will observe in addition  that the constant $5$ can be much improved in the $K3$ case. By setting $p=0$ and $p=1$, we will recover 
Saint-Donat's results when $(L^2)\geq 10$. Beside the by now classical theory of Koszul cohomology, our argument relies on  recent work of Aprodu and Farkas \cite{AF11}, 
where  Green's conjecture for smooth curves on K3 surfaces is verified, and earlier work  of Serrano \cite{Se87}. 

Going further, we establish an similar result for Enriques surfaces; this time however our result is conditional on the Green--Lazarsfeld secant conjecture for curves (for a precise 
statement see Conjecture~\ref{conj:GL secant}). 
 
\begin{theoremB}\label{thm:B}
Let $p$ be a natural number, $X$ an Enriques surface over the complex numbers,  and $L$ an ample and globally generated line bundle on $X$ such that $(L^2)>4(p+2)^2$.
Assume  that the Green--Lazarsfeld secant conjecture  holds for a general curve in  $|L|$. Then the following are equivalent
\begin{enumerate}
\item $L$ has property $N_p$.
\item there is no effective and reduced divisor $F$ with $(F^2)=0$ such that $1\leq (E\cdot L)\leq p+2$.
\end{enumerate}  
\end{theoremB}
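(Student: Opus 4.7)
The implication $(1) \Rightarrow (2)$ is the standard restriction obstruction. If such an $F$ exists, then adjunction together with $K_X \equiv 0$ gives $p_a(F) = 1$, so $L|_F$ is a line bundle of degree at most $p+2$ on a reduced curve of arithmetic genus one. Such a bundle cannot satisfy $(N_p)$, since on a genus one curve $(N_p)$ requires degree at least $p+3$. Because $L$ is ample and $K_X$ is $2$-torsion, Kodaira vanishing yields $H^1(X, qL - F) = 0$ for every $q \geq 1$, which lets us propagate the failure of $(N_p)$ from $F$ back to $(X,L)$ via the usual Koszul surjection argument, contradicting $(N_p)$ on $X$.

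For $(2) \Rightarrow (1)$, choose a smooth irreducible curve $C \in |L|$, guaranteed by Bertini since $L^2 > 4(p+2)^2$. Since $h^1(\mathcal{O}_X)=0$ on an Enriques surface and $H^i(X, qL) = 0$ for $i>0$, $q \geq 1$ by Kodaira vanishing, Green's restriction theorem identifies $K_{p,2}(X,L) \cong K_{p,2}(C, L|_C)$, reducing property $(N_p)$ for $(X,L)$ to the vanishing of this latter Koszul group. The key subtlety is that adjunction on $C$ gives $K_C = L|_C \otimes \eta$ where $\eta = K_X|_C$ is a nontrivial $2$-torsion element of $\Pic(C)$ for a general $C$. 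Hence $L|_C$ is \emph{not} the canonical bundle of $C$, which is precisely why the route through Green's conjecture and the Aprodu--Farkas theorem used in Theorem~A is unavailable here. The Green--Lazarsfeld secant conjecture is exactly what is needed to fill this gap: in the non-canonical range occupied by $L|_C$ it provides a geometric criterion for the Koszul vanishing in terms of $(p+2)$-secant $p$-planes to $C$ inside $\PP H^0(C, L|_C)$.

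The remaining step is to convert hypothesis (2) into the non-existence of such secant configurations on $C$. The plan is: a $(p+2)$-secant $p$-plane to $C$ produces an effective divisor $D$ of degree $p+2$ on $C$ with $h^0(L|_C(-D)) \geq h^0(L|_C) - p - 1$, which forces a low-degree pencil on $C$; such a pencil, by standard decomposition arguments for linear series on Enriques surfaces, must lift to an effective reduced divisor $F \subset X$ with $F^2 = 0$ and $F \cdot L \leq p+2$, contradicting (2). The numerical bound $L^2 > 4(p+2)^2$ is tailored to place $\deg(L|_C) = L^2$ in the regime where the secant conjecture applies and where any such secant on $C$ does indeed lift to the surface.

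The main obstacle will be the lifting step: establishing a clean geometric translation between a $(p+2)$-secant $p$-plane to $C$ and the existence of a bad divisor $F$ on the ambient Enriques surface. This demands sharp control of the Clifford index and low-degree pencils on a general $C \in |L|$, together with a careful analysis of how special linear series on $C$ arise from the ambient geometry (passing through the K3 double cover as an auxiliary tool when needed). The reliance on the Green--Lazarsfeld conjecture in place of a proven Green-type theorem is what forces this slightly more conditional route and is the reason the statement of Theorem~B is conditional whereas Theorem~A is unconditional.
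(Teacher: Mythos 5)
Your overall architecture agrees with the paper's: restrict to a general smooth $C\in|L|$ via the Lefschetz theorem for Koszul cohomology, observe that $\sO_C(L)=\sO_C(K_C+\eta)$ with $\eta$ a nontrivial $2$-torsion class (so Green's conjecture is unavailable and the secant conjecture is the right substitute), and then translate failure of $(p+1)$-very ampleness into a forbidden divisor on $X$. However, two genuine gaps remain in the direction $(2)\Rightarrow(1)$. First, the secant conjecture carries a degree hypothesis, $\deg L\geq 2g+1+p-2h^1(C,L)-\Cliff(C)$; since $\deg\sO_C(L)=2g-2$ and $h^1(C,\sO_C(L))=0$, this is \emph{equivalent} to $\Cliff(C)\geq p+3$, which is not automatic and is nowhere established in your plan. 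The paper proves it using the Knutsen--Lopez formula $\Cliff(C)=\min\{2\phi(L)-2,\mu(L)-2,\lfloor (L^2)/4\rfloor\}$, ruling out the $\mu$-term by the Hodge index inequality $(B\cdot L)\geq\sqrt{(B^2)(L^2)}>4(p+2)$, and then showing that a nontrivial effective $F$ with $(F^2)=0$ and $(F\cdot L)\leq\frac{p+4}{2}$ can be taken \emph{reduced}: every subdivisor has nonpositive self-intersection by Hodge index, and a configuration supported on a tree of $(-2)$-curves has $(F^2)<0$, a contradiction. This reduction to a reduced $F$ is a real step that your hypothesis (2) (which only excludes reduced divisors) forces you to carry out.

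Second, your translation of a $(p+2)$-secant $p$-plane into "a low-degree pencil on $C$" is not correct. Failure of $(p+1)$-very ampleness produces a length-$\ell$ scheme $Z$ ($\ell\leq p+2$) with $h^0(Z-\eta)\geq 1$; this does not yield a pencil of degree $\leq p+2$, so the Serrano-type extension of pencils used in the K3 case is not the right lifting mechanism here. The paper instead lifts the zero-dimensional scheme directly: by Knutsen's result \cite{Kn01}*{Proposition 3.7}, a minimal such $Z$ is contained in an effective divisor $D\subseteq X$ with $(L\cdot D)\leq (D^2)+\ell\leq 2\ell$, and then $(L\cdot D)\geq\ell$ forces $(D^2)\geq 0$, while $(D^2)\geq 1$ is excluded by Hodge index against $(L^2)>4(p+2)^2$; hence $(D^2)=0$ and $(L\cdot D)\leq p+2$, contradicting (2). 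Without this (or an equivalent) lifting statement your argument does not close. A smaller issue: in $(1)\Rightarrow(2)$ your appeal to Kodaira vanishing for $H^1(X,qL-F)$ at $q=1$ is unjustified ($L-F$ need not be positive); the paper avoids this entirely by the linear-span argument of Proposition~\ref{prop:easier direction}, which only needs $h^0(F,\sO_F(L))\leq (L\cdot F)$ and the Gruson--Lazarsfeld--Peskine lemma.
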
 
 
We prove Theorem B as Theorem~\ref{thm:Enriques} in Section 5  by reducing to the one-dimensional case using Koszul cohomology. Our argument is  supported by results of Knutsen and Lopez~\cite{knutsenlopez} on linear series on Enriques surfaces.

\begin{remark}
We believe that the lower bound  $(L^2)\geq 5(p+2)^2$ in Theorem A can be improved to $(L^2) > 4(p+2)^2$. This is true for K3 surfaces as a consequence of the more precise estimates of Theorem \ref{thm:gle}, so that the question remains open for abelian surfaces. This would also be the optimal bound for abelian surfaces: if $p=0$, then by work of Barth  \cite{barth}, a general 
polarized abelian surface $(X,L)$  of type $(2,4)$ (in particular with $(L^2)=16$) does not have property $N_0$, at the same time  such a surface does not contain  elliptic curves.
\end{remark}

The more accurate results  of Theorem~\ref{thm:gle} and Proposition~\ref{prop:seshadri}  establish Theorem~A for polarized K3 surfaces. As an easy consequence of Theorem~\ref{thm:gle} we also get a characterization of property $(N_p)$ for ample line bundles of type $L^{\otimes m}$, with $m\geq p$, along the lines of Mukai's conjecture. The case of abelian surfaces follows from \cite{KL15}*{Theorem 1.1} and Proposition~\ref{prop:seshadri}.

About the organization of the article: In Section 2 we establish the necessary preliminaries,  collect a few useful statements about Koszul cohomology and property $(N_p)$ in general. In Section 3 we discuss property $(N_p)$ on surfaces,  and among others  we prove the equivalence of $(2)$ and $(3)$ of Theorem~A.
Section 4 is devoted to the proof of Theorem A and some consequences, note again that the results here usually provide estimates on $(L^2)$ that are considerably stronger than the ones in Theorem A. Section 5 hosts the proof of Theorem B. 

\subsection*{Acknowledgements}
We are grateful to Giuseppe Pareschi, Angelo Lopez, Klaus Hulek, Gavril Farkas, and Andreas Leopold Knutsen for helpful conversations. The first author was  supported by the grant IRTG 1800  of the DFG.

\section{Preliminaries} 

We work over the complex numbers, every variety is connected, smooth, and projective, unless otherwise mentioned. 
We recall that for a polarized  variety $(X,L)$ and an arbitrary point $x\in X$, the \textit{Seshadri constant} of $L$ at $x$ is defined to be
 \[
  \epsilon(L;x) \ \deq \ \inf_{x\in C\subseteq X} \frac{(L\cdot C)}{\textup{mult}_x(C)} \ .
 \]
If the infimum is achieved by some curve $C\subseteq X$, then we call $C$ a \textit{Seshadri exceptional curve}.

\subsection{Koszul cohomology}
Let $V$ be a vector space of dimension $n$ and $S=\operatorname{Sym}^{\bullet}V$ the graded symmetric algebra over $V$. Then for any finitely generated graded $S$-module $M$ we have an unique (up to isomorphism) \textit{minimal graded free resolution}:
\[ 0 \longrightarrow F_n \longrightarrow F_{n-1} \longrightarrow \dots \longrightarrow F_1 \longrightarrow F_0 \longrightarrow M \longrightarrow 0  \]
where the $F_p$ are graded free $S$-modules of finite rank. We can write  
\[ F_p = \bigoplus_{q\in\ZZ} K_{p,q}(M;V)\otimes_{\CC} S(-p-q) \]
for certain finite dimensional vector spaces $K_{p,q}(M;V)$ which are called the \textit{Koszul cohomology groups} of $M$ w.r.t $V$. The name Koszul cohomology comes from the fact that they can be computed as the middle cohomology of the Koszul complex \cite{G84}:
\[ \wedge^{p+1}V\otimes M_{q-1} \to \wedge^p V \otimes M_q \to \wedge^{p-1}V\otimes M_{q+1} \qquad v_1\wedge \dots \wedge v_p \otimes m \mapsto \sum_{i=1}^p(-1)^{i+1}v_1 \wedge \dots \wedge \widehat{v_i} \wedge \dots \wedge v_p \otimes v_i\cdot m \]

We will need later to compare Koszul cohomology with respect to  two different vector spaces. More specifically, suppose that we have a short exact sequence of vector spaces 
\[ 0\longrightarrow U \longrightarrow V \longrightarrow W \longrightarrow 0 \ ,\] 
and a finitely generated graded $\operatorname{Sym}^{\bullet}(W)$-module $M$. Then $M$ has also a $\operatorname{Sym}^{\bullet}(V)$-module structure, and we can compare the Koszul 
cohomologies computed with respect to $W$ and $V$. 

\begin{lemma}\label{lemma:koszvw}
In the above situation, we have a non-canonical isomorphism
\[ K_{p,q}(M;V) \cong \bigoplus_{i=0}^p \wedge^{p-i}U \otimes K_{i,q}(M;W) \]
In particular $K_{p,q}(M;W)\subseteq K_{p,q}(M;V)$.
\end{lemma}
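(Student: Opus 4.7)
The plan is to reduce the decomposition of Koszul cohomology to a splitting of the underlying Koszul complex, exploiting the fact that $U$ acts trivially on $M$.

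First, I would choose a (non-canonical) linear splitting $V \cong U \oplus W$ of the given short exact sequence. This immediately induces a decomposition
\[ \wedge^p V \;\cong\; \bigoplus_{a+b=p} \wedge^a U \otimes \wedge^b W, \]
so at each internal degree $q$ the $p$-th term of the Koszul complex becomes
\[ \wedge^p V \otimes M_q \;\cong\; \bigoplus_{a+b=p} \wedge^a U \otimes \wedge^b W \otimes M_q. \]

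Next, I would examine what the Koszul differential does on such a decomposable element. Writing a pure tensor as $u_1 \wedge \dots \wedge u_a \wedge w_1 \wedge \dots \wedge w_b \otimes m$, the Koszul differential produces two types of terms: those where one of the $u_i$ is removed and acts on $m$, and those where one of the $w_j$ is removed and acts on $m$. Since $M$ has its $\operatorname{Sym}^\bullet(V)$-module structure pulled back from $\operatorname{Sym}^\bullet(W)$, every $u \in U$ annihilates $M$, so the first type of term vanishes identically. What remains is, up to the fixed sign coming from reordering past $u_1 \wedge \dots \wedge u_a$, the Koszul differential of $w_1 \wedge \dots \wedge w_b \otimes m$ in the Koszul complex for $W$. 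Hence, as a complex (at fixed internal degree $q$),
\[ K_\bullet(M;V)_q \;\cong\; \bigoplus_{a+b=\bullet} \wedge^a U \otimes K_b(M;W)_q, \]
with the differential acting only on the second tensor factor and $\wedge^a U$ sitting as a constant coefficient.

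Taking cohomology at position $p$ then gives
\[ K_{p,q}(M;V) \;\cong\; \bigoplus_{a+b=p} \wedge^a U \otimes K_{b,q}(M;W) \;=\; \bigoplus_{i=0}^p \wedge^{p-i} U \otimes K_{i,q}(M;W), \]
which is the desired isomorphism. The inclusion $K_{p,q}(M;W) \hookrightarrow K_{p,q}(M;V)$ is then obtained by singling out the summand $i=p$, where $\wedge^0 U = \CC$. The only genuinely delicate point is checking that the differential respects the bigrading $(a,b)$ so that the complex really does split as a direct sum of subcomplexes; but this is immediate once one observes that $U \cdot M = 0$, so I expect no substantive obstacle — the argument is essentially bookkeeping plus the triviality of the $U$-action.
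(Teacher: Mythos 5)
Your argument is the same as the paper's: fix a splitting $V\cong U\oplus W$, decompose $\wedge^p V$ accordingly, observe that $U\subseteq\operatorname{Ann}(M)$ kills every term of the differential that removes a factor from $U$, so the Koszul complex splits into subcomplexes $\wedge^a U\otimes K_\bullet(M;W)_q$ with constant coefficients $\wedge^a U$, and take cohomology. The proof is correct and matches the paper's, including the identification of the $i=p$ summand for the final inclusion.
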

\begin{proof}
Fix a splitting $V=U\oplus W$. Then $\wedge^p V = \bigoplus_{i=0}^p \wedge^{p-i}U \otimes \wedge^i W$ and the Koszul complex behaves well w.r.t this splitting: indeed, since $U\subseteq \operatorname{Ann}(M)$ we see that
\[ u_1 \wedge \dots \wedge u_{p-i} \wedge w_1 \wedge \dots \wedge w_i \otimes m \mapsto u_1\wedge \dots \wedge u_{p-i} \wedge \left( \sum_{k=0}^i (-1)^{k+1}\wedge w_1 \wedge \dots \wedge \widehat{w_k} \wedge \dots \wedge w_i \otimes w_k \cdot m \right)  \]
thus, the Koszul complex of $(M;V)$ splits and Koszul cohomology splits as well. The last statement follows by taking $i=p$.
\end{proof}

In the geometric setting, let $X$ be an irreducible projective variety of positive dimension, $\mathcal{F}$ a coherent sheaf on $X$ and $L$ a globally generated ample line bundle on $X$. We denote by $S\deq \Sym^{\bullet}H^0(X,L)$ the homogeneous coordinate ring of the space $\PP(H^0(X,L))$ and by $\Gamma_X(\mathcal{F},L) = \bigoplus_{q\in \mathbb{Z}}H^0(X,\mathcal{F}\otimes qL)$ the module of sections of $\mathcal{F}$. Then $\Gamma_X(\mathcal{F},L)$ 
has a natural structure of a finitely generated graded $S$-module so that we can take the Koszul cohomology groups
\begin{align*}
  K_{p,q}(X,\mathcal{F},L) & := K_{p,q}(\Gamma_X(\mathcal{F},L);H^0(X,L)) \\
  K_{p,q}(X,L) & := K_{p,q}(X,\mathcal{O}_X,L)                           
\end{align*}

Property $(N_p)$ for polarized varieties $(X,L)$ has been introduced by Green and Lazarsfeld in \cite{GL87}. 

\begin{definition}[Property $(N_p)$]
In the above situation, we say that $(X,L)$ has property $N_p$ if $K_{i,q}(X,L)=0$ for all $i\leq p$ and $q\geq 2$. 
\end{definition}

\begin{remark}
Property $(N_p)$ means that the resolution is as simple as possible in the first $p$ steps:
\[ 
\dots \to S(-p-1)^{\oplus k_{p}} \to S(-p)^{\oplus k_{p-1}} \to \dots \to S(-2)^{\oplus k_1} \to S \to \Gamma_X(\mathcal{O}_X,L) \to 0 
\]
with $k_{i} \in \mathbb{N}$. In particular, property $(N_0)$ coincides with projective normality, and property $(N_1)$ means that the embedding is projectively normal and that the homogeneous 
ideal is generated by quadrics.
\end{remark}

\subsection{Syzygies on curves}

Canonically polarized curves are an important and much studied case. In this situation, we expect that property $(N_p)$ is determined by  the Clifford index of the curve.

\begin{definition}[Clifford index]
  Let $C$ be a smooth, irreducible, projective curve of genus $g\geq 4$. We define the Clifford index of $C$ as
  \[ \operatorname{Cliff}(C) = \min\left\{ \deg(L) - 2h^0(X,L) + 2 \,|\, L \in \operatorname{Pic}(C), h^0(C,L)\geq 2, h^1(C,L)\geq 2 \right\} \]
  Moreover, if $C$ is a smooth curve of genus $2$ or $3$, we define the Clifford index of $C$ to be zero if the curve is hyperelliptic and one otherwise.
\end{definition}
Generalizing some classical results about canonical curves,  Green \cite{G84} proposed the following.

\begin{conjecture}[Green]
Let $C$ be a smooth, irreducible, projective curve of genus $g\geq 2$. Then $(C,\omega_C)$ has property $N_p$ if and only if $p<\operatorname{Cliff}(C)$.
\end{conjecture}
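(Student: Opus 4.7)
The plan is to split the biconditional and observe that one direction is essentially formal while the reverse is the deep content of Green's conjecture. For the implication $(N_p) \Rightarrow p < \operatorname{Cliff}(C)$, I would argue by contrapositive: assuming $\operatorname{Cliff}(C) = c \leq p$, exhibit a nonzero class in $K_{c, 2}(C, \omega_C)$, which violates $(N_c)$ and hence $(N_p)$. The mechanism is the Green--Lazarsfeld non-vanishing theorem applied to a line bundle $A$ achieving the Clifford index: writing $h^0(A) = r + 1$ and $\deg A = c + 2r$, Riemann--Roch gives $h^0(\omega_C \otimes A^{-1}) = g - c - r$, and applying non-vanishing to the pair $(A,\, \omega_C \otimes A^{-1})$ produces
\[ K_{g - c - 2,\, 1}(C, \omega_C) \neq 0. \]
Green's duality isomorphism $K_{p, q}(C, \omega_C) \cong K_{g - 2 - p,\, 3 - q}(C, \omega_C)^{\vee}$ then translates this into $K_{c, 2}(C, \omega_C) \neq 0$, obstructing $(N_c)$.

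For the converse $p < \operatorname{Cliff}(C) \Rightarrow (N_p)$, which is Green's conjecture proper, my plan is to specialize to curves realized as smooth members of a primitive polarization on some K3 surface $X$, so as to invoke the Aprodu--Farkas result cited earlier in the excerpt. On the K3 side the Lazarsfeld--Mukai vector bundle attached to a pencil of minimal degree controls the Clifford index of $C$, and a Lefschetz-type identification of Koszul groups between $(C, \omega_C)$ and $(X, \mathcal{O}_X(C))$ reduces the desired vanishing to one on the K3 that Aprodu--Farkas supply. Combined with Aprodu's theorem relating the Clifford index to $\operatorname{gon}(C) - 2$ for Brill--Noether generic curves, this yields the conjecture for K3 sections and for a dense open subset of each gonality stratum of $\mathcal{M}_g$.

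The main obstacle, and the reason the conjecture is still open in full generality, is that an arbitrary smooth curve need not be a hyperplane section of any K3 surface: by Wahl's Gaussian-map obstruction, the locus of K3 sections in $\mathcal{M}_g$ is a proper subvariety in large enough genus. To promote the K3 argument to all curves one would need either a deformation within a fixed gonality stratum, combined with upper semicontinuity of Koszul cohomology and Aprodu's gonality result, to reduce an arbitrary curve to a K3 section, or else an intrinsic treatment of the Koszul complex on $C$ that bypasses auxiliary surfaces entirely. Both possibilities lie well beyond the methods available in this paper, so a direct attempt along these lines establishes Green's conjecture only for the K3 sections relevant to its applications in the main theorems, leaving the general statement as a genuine conjecture.
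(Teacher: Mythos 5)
The statement you were asked to prove is labelled a \emph{conjecture} in the paper, and the text immediately following it confirms that it is open in general; the paper supplies no proof, only citations (Voisin for general curves, Aprodu--Farkas for curves on K3 surfaces, the latter being all the paper actually uses). So there is no internal argument to compare against, and your honest assessment of the situation is the correct response. The direction you do prove --- that $\operatorname{Cliff}(C)\leq p$ forces the failure of $(N_p)$ --- is the standard argument and your sketch of it is sound: with $A$ computing the Clifford index, $c=\deg A-2r$ and $h^0(\omega_C\otimes A^{-1})=h^1(A)=g-c-r$ by Riemann--Roch, the Green--Lazarsfeld nonvanishing theorem (applicable because the definition of the Clifford index already guarantees $h^0(A)\geq 2$ and $h^1(A)\geq 2$) gives $K_{g-c-2,1}(C,\omega_C)\neq 0$, and duality converts this to $K_{c,2}(C,\omega_C)\neq 0$. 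Your description of the converse --- reduction to K3 sections via the Lefschetz isomorphism of Koszul groups and Aprodu--Farkas, with Wahl's Gaussian obstruction blocking any extension of this method to arbitrary curves --- accurately locates where the difficulty lies and why the statement remains a conjecture.

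Two minor caveats. First, your Brill--Noether argument for the easy direction presupposes $g\geq 4$, where the Clifford index is computed by an actual line bundle; for $g=2,3$ the paper uses an ad hoc definition (Clifford index $0$ or $1$ according to hyperellipticity), and in those cases the conjecture is the classical Noether--Enriques--Babbage circle of results, which should be quoted separately rather than derived from nonvanishing. Second, for the purposes of this paper only the K3-section case is needed (Corollary 4.2 invokes Aprodu--Farkas directly), so your observation that the general case is out of reach does not affect any downstream argument.
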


This conjecture is open in general, but it has been proven for general curves by Voisin \cites{V02,V05} using K3 surfaces. 
Building on her work, Aprodu and Farkas  \cite{AF11} verified  the conjecture  for every smooth curve on any  K3 surface.

\section{Property $(N_p)$ on surfaces}

This section contains general results about property  $(N_p)$ on surfaces. Let $X$ be a smooth projective surface, $L$  an ample and globally generated line bundle on $X$. Then the evaluation map on global sections gives rise to the  exact sequence
\[
 0\longrightarrow \ M_L \ \longrightarrow \ H^0(X,L)\otimes \sO_X \ \longrightarrow \ L \ \longrightarrow 0 \ ,
\]
where $M_L$ is a locally free, it is called the  \textit{syzygy bundle} of $L$.

As pointed out in the introduction, we intend to characterize property $(N_p)$ on surfaces 
via the non-existence of certain low degree curves.  In one direction we  one can detect failure of property $N_p$ on surfaces by restricting to curves.

\begin{proposition}\label{prop:easier direction}
Let $X$ be a smooth projective surface, $L$ an ample and globally generated line bundle on $X$ and $p\geq 0$ be some nonnegative integer. Suppose that there is a reduced effective divisor $F \subseteq X$ of  arithmetic genus $p_a(F) \geq 1$ such that $(L\cdot F) \leq p+2$. Then $(X,L)$ does not have property $N_p$.
\end{proposition}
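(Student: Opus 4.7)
I aim to establish $K_{p,2}(X,L)\neq 0$, which witnesses the failure of property $(N_p)$. The plan is to restrict the embedding to the curve $F$, exploit the fact that a reduced curve of positive arithmetic genus and $L$-degree at most $p+2$ sits in a ``too small'' linear subspace to admit the required syzygetic vanishing, and then transfer the resulting obstruction back to the surface.

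Set $V=H^0(X,L)$, and consider the ideal sheaf sequence $0\to L(-F)\to L\to L|_F\to 0$. Let $V_F$ be the kernel of the induced restriction on global sections, and $U\subseteq H^0(F,L|_F)$ its image. Since $F$ has arithmetic genus $g\geq 1$ and $\deg(L|_F)=(L\cdot F)\leq p+2$, Riemann--Roch on $F$ combined with Clifford's inequality gives $h^0(F,L|_F)\leq p+2$, and in particular $\dim U\leq p+2$. Geometrically this means that $F$ lies in a linear subspace $\Pi\subseteq \mathbb{P}(V^{\vee})$ of dimension at most $p+1$.

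A non-degenerate reduced curve of positive arithmetic genus in $\mathbb{P}^r$ with $r\leq p+1$ and degree at most $r+1$ is in an ``extremal'' regime --- essentially an elliptic normal curve or a suitable degeneration thereof --- and classical Koszul non-vanishing results for curves (e.g.\ Green's theorem for elliptic normal curves and its extensions, applied in the range $d\leq 2g+p$) supply an index $i\leq p$ with $K_{i,2}(F,L|_F;U)\neq 0$. Applying Lemma~\ref{lemma:koszvw} to the exact sequence $0\to V_F\to V\to U\to 0$, this non-vanishing is inherited by $K_{i,2}(F,L|_F;V)$.

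Finally, I pass back from $F$ to $X$. Using the short exact sequence of graded $\Sym^{\bullet}V$-modules coming from the restriction map $\Gamma_X(L)\to \Gamma_F(L|_F)$ and the associated long exact sequence of Koszul cohomology, the hypothetical assumption that $(X,L)$ has $(N_p)$ --- i.e., $K_{j,q}(X,L)=0$ for $j\leq p$, $q\geq 2$ --- propagates through the long exact sequence to force $K_{i,2}(F,L|_F;V)=0$, contradicting the non-vanishing above. The main technical obstacle is precisely this last propagation step: without good vanishing for $L^{\otimes q}(-F)$, the restriction on section modules may fail to be surjective in higher degrees, so the intermediate contributions coming from $\Gamma_X(L\otimes \mathcal{I}_F)$ must be handled carefully. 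The cleanest way around this is to first reduce to the case where $F$ is smooth and irreducible, and then work with the restricted syzygy bundle $M_L|_F$, for which the comparison between surface and curve cohomology is more tractable.
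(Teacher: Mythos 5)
Your reduction of the problem to the curve $F$ spanning a linear space of dimension at most $p+1$ is the right starting point (and agrees with the paper, which proves $h^0(F,\sO_F(L))\leq (L\cdot F)$ by a direct argument with the dualizing sheaf $\sO_F(K_X+F)$ rather than by ``Clifford's inequality,'' which is not a standard tool for reduced but possibly reducible and singular curves). However, the two subsequent steps both have genuine gaps. First, the non-vanishing $K_{i,2}(F,L|_F;U)\neq 0$ for some $i\leq p$ is asserted by appeal to ``Green's theorem for elliptic normal curves and its extensions,'' but $F$ is an arbitrary connected reduced curve of positive arithmetic genus --- for instance a cycle of smooth rational $(-2)$-curves --- and $U$ is only a basepoint-free subsystem of $H^0(F,L|_F)$; there is no classical theorem covering this generality, and the claim is essentially the curve-level content of the proposition itself, so invoking it is close to circular. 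Second, the transfer back to $X$ via the long exact sequence of Koszul cohomology requires the vanishing of $K_{j-1,q+1}(X,-F,L)$, i.e.\ control of $H^1(X,qL-F)$ and related groups; this is exactly the role of hypotheses (1)--(3) in Proposition~\ref{lem:surfacetocurve}, none of which are available here. You correctly identify this obstacle, but your proposed remedy --- reducing to $F$ smooth and irreducible --- is not available: a reduced divisor of arithmetic genus $\geq 1$ need not contain any irreducible subcurve of positive genus, so the hypothesis cannot be upgraded.

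The paper's proof takes a different and more robust route that sidesteps both issues: assuming $(N_p)$, it sheafifies the minimal free resolution of $\Gamma_X(\sO_X,L)$ over $\PP^N$, restricts it to the linear span $\Lambda\cong\PP^{\leq p+1}$ of $F$, and uses the linearity of the first $p$ steps together with \cite{GLP}*{Lemma 1.6} to deduce $H^1(X\cap\Lambda,\sO_{X\cap\Lambda})=0$, hence $H^1(F,\sO_F)=0$, contradicting $p_a(F)\geq 1$. This converts the syzygetic hypothesis directly into a coherent-cohomology vanishing on the span, requiring no Koszul non-vanishing on the curve and no surjectivity of restriction maps in higher degrees. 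I would encourage you to rework your argument along these lines, or else to supply precise statements and proofs for the two steps flagged above.
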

\begin{proof}
  We can clearly suppose $F$ to be connected, otherwise we can restrict to a subdivisor. First we claim that $h^0(F,\sO_F(L)) \leq (L\cdot F)$: by Riemann-Roch, we get that
  \[ h^0(\sO_F(L)) = (L\cdot F) + 1 - h^1(\sO_F) + h^1(\sO_F(L)) \]
  and recalling that $\sO_F(K_X+F)$ is  dualizing  on $F$, we can rewrite this as
  \[ h^0(\sO_F(L)) = (L\cdot F) +1 - h^0(\sO_F(K_X+F)) + h^0(\sO_F(K_X+F-L)) \]
  so that the claim is equivalent to $h^0(\sO_F(K_X+F-L)) \leq  h^0(\sO_F(K_X+F)) -1$. To show this, we observe that $h^0(\sO_F(K_X+F)) = p_a(F) \geq 1$, so that there exists a nonzero section in  $ H^0(F,\sO_F(K_X+F))$: since $F$ is reduced, there exists a point $p\in F$ where the section does not vanish, and since $L$ is globally generated, we can find a curve $C \in |L|$ that intersects $F$ transversally and passes through $p$. Taking cohomology in the short exact sequence
  \[ 0 \to \sO_F(K_X+F-L) \to \sO_F(K_X+F) \to \sO_{C\cap F}(K_X+F) \to 0 \]
  we get an exact sequence
  \[ 0 \to H^0(\sO_D(K_X+F-L)) \to H^0(\sO_F(K_X+F)) \to H^0(\sO_{C\cap F}(K_X+F)) \]
  and since the last map is nonzero by construction we see that $h^0(\sO_F(K_X+F-L)) \leq h^0(\sO_F(K_X+F))-1$.

  The rest of the proof goes as in the proof of \cite{KL15}{Theorem 4.1}: suppose now that $(X,L)$ has property $N_p$, then in particular $L$ is very ample, so that we can consider $X$ as an embedded surface $X\subseteq \mathbb{P}^N$. Let $\Lambda$ be the linear space spanned by $F$. By definition $\Lambda$ is the projectivization of the image of the restriction map $H^0(X,L)\to H^0(F,\sO_F(L))$, hence
  \[ \dim \Lambda \leq h^0(F,\sO_F(L)) -1 \leq (L\cdot F) -1 \leq p+1 \]
By sheafifying the minimal free resolution of $\Gamma_X(\mathcal{O}_X,L)$ we get a resolution of sheaves on $\mathbb{P}^N$:
\[ 0 \to E_n  \to E_{n-1} \to \dots \to E_1 \to E_0 \to \sO_X \to 0 \qquad E_p = \bigoplus_q K_{p,q}(X,L)\otimes \sO_{\mathbb{P}^N}(-p-q)   \]
and  if we restrict this resolution to $\Lambda$, we get a complex
\[ 0 \to {E_n}_{|\Lambda}  \to {E_{n-1}}_{|\Lambda} \to \dots \to {E_1}_{|\Lambda} \to {E_0}_{|\Lambda} \to \sO_{X\cap \Lambda} \to 0 \qquad {E_p}_{|\Lambda} = \bigoplus_q K_{p,q}(X,L)\otimes \sO_{\Lambda}(-p-q)   \]
which is exact at $\sO_{X\cap \Lambda}$ and everywhere exact outside $X\cap \Lambda$, which has dimension at most one, as $X\subseteq \PP^N$ is nondegenerate. Since $(X,L)$ has property $N_p$, we know that $ K_{i,q}(X,L)=0$ for all $ 0 \leq i \leq p,\,\, q\geq 2$ and in particular we obtain that $H^i(\Lambda, {E_i}_{|\Lambda})=0$ for all $i>0$ and $0\leq p \leq \dim \Lambda -1$. Hence using \cite{GLP}*{Lemma 1.6} we get that $H^1(X\cap \Lambda, \sO_{X\cap \Lambda})=0$. Since $F$ is a subscheme of $X\cap \Lambda$, we have an exact sequence
\[ 0 \to \mathcal{I}_{F/X\cap \Lambda} \to \sO_{X\cap \Lambda} \to \sO_F \to 0 \]
and taking cohomology we see that $H^2(X\cap \Lambda,\mathcal{I}_{F/X\cap \Lambda})=0$, because $X\cap \Lambda$ has dimension one. Then,
it follows that $H^1(F,\mathcal{O}_F)=0$, which is absurd because $p_a(F)\geq 1$.
\end{proof}

%

The following is a general condition under which  property $(N_p)$ for a polarized surface $(X,L)$ is inherited by curves in $X$.

\begin{proposition}\label{lem:surfacetocurve}
Let $X$ be a surface,  $L$ an ample and globally generated line bundle on $X$. Let $F\subseteq X$ be a smooth curve such that
\begin{enumerate}
\item $H^1(X,L-F)=H^1(X,2L-F)=0$.
\item $H^2(X,L-F)=0$.
\item $K_{i,0}(X,K_X+F,L)=H^0(X,\wedge^{i}M_L \otimes(K_X+F))=0$ for all $i=h^0(X,L)-p-2$.
\end{enumerate}
If $(X,L)$ has property $(N_p)$, then so does  $(F,\sO_F(L))$.
\end{proposition}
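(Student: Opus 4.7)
The plan is to transfer Koszul cohomology vanishings from $X$ down to $F$ using the syzygy bundle $M_L$ and the restriction sequence $0 \to \mathcal{O}_X(-F) \to \mathcal{O}_X \to \mathcal{O}_F \to 0$. Write $V = H^0(X, L)$, $r = \dim V - 1$, and $\Gamma = \bigoplus_{q} H^0(F, L|_F^{\otimes q})$. Condition (1) yields the short exact sequence
\[
0 \to H^0(X, L - F) \to V \to H^0(F, L|_F) \to 0,
\]
so viewing $\Gamma$ as a $\mathrm{Sym}(V)$-module through the resulting surjection, Lemma \ref{lemma:koszvw} produces an inclusion $K_{i, q}(F, L|_F) \hookrightarrow K_{i, q}(\Gamma; V)$ for every $i, q$. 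Hence property $(N_p)$ for $(F, L|_F)$ will follow once the vanishing $K_{i, q}(\Gamma; V) = 0$ is established for all $i \leq p$ and $q \geq 2$.

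Because $L$ is locally free, restricting $0 \to M_L \to V \otimes \mathcal{O}_X \to L \to 0$ to $F$ remains exact, yielding $0 \to M_L|_F \to V \otimes \mathcal{O}_F \to L|_F \to 0$. A standard Koszul cohomology computation then identifies $K_{i, q}(\Gamma; V)$ with $H^1(F, \wedge^{i+1}(M_L|_F) \otimes L|_F^{q-1})$ in the relevant range. Since $L$ is very ample (as $(X, L)$ satisfies $N_0 \subseteq N_p$), the line bundle $L|_F$ is positive enough that Green's duality on the curve $F$ handles the cases $q \geq 3$ automatically, reducing the problem to the vanishing
\[
H^1(F, \wedge^{i+1}(M_L|_F) \otimes L|_F) = 0 \quad \text{for all } 0 \leq i \leq p.
\]

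For this I would exploit the restriction sequence on $X$
\[
0 \to \wedge^{i+1} M_L \otimes (L - F) \to \wedge^{i+1} M_L \otimes L \to \wedge^{i+1}(M_L|_F) \otimes L|_F \to 0,
\]
whose associated long exact sequence contains
\[
H^1(X, \wedge^{i+1} M_L \otimes L) \to H^1(F, \wedge^{i+1}(M_L|_F) \otimes L|_F) \to H^2(X, \wedge^{i+1} M_L \otimes (L - F)).
\]
The first term is $K_{i, 2}(X, L)$ and vanishes by $(N_p)$ on $X$. For the third, Serre duality on $X$ together with $\det M_L = L^{-1}$ (so that $\wedge^{i+1} M_L^\vee \cong \wedge^{r-i-1} M_L \otimes L$) gives
\[
H^2(X, \wedge^{i+1} M_L \otimes (L - F)) \cong H^0(X, \wedge^{r-i-1} M_L \otimes (K_X + F))^\vee,
\]
which is precisely condition (3) when $i = p$, corresponding to $r - i - 1 = h^0(X, L) - p - 2$.

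The main obstacle is propagating condition (3) from the single index $i = p$ to the full range $i \leq p$ demanded by $(N_p)$ on $F$. I plan to handle this by downward induction on $i$, combining the syzygy sequences
\[
0 \to \wedge^{k+1} M_L \otimes (L - F) \to \wedge^{k+1} V \otimes (L - F) \to \wedge^k M_L \otimes (2L - F) \to 0
\]
to rewrite each required $H^2$-vanishing as an $H^1$-vanishing of the form $H^1(X, \wedge^k M_L \otimes (2L - F))$. The base of the induction is supplied by condition (2), namely $H^2(X, L - F) = 0$, while the vanishings $H^1(X, L - F) = H^1(X, 2L - F) = 0$ from condition (1) keep the middle terms under control throughout and let the propagation go through cleanly.
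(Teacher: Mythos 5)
Your overall architecture matches the paper's: use condition (1) to compare the coordinate rings of $X$ and $F$ via Lemma \ref{lemma:koszvw}, reduce property $(N_p)$ for $(F,\mathcal{O}_F(L))$ to vanishings coming from $X$, and convert the $q=2$ obstruction by Serre/Green duality into $H^0(X,\wedge^{j}M_L\otimes(K_X+F))$ so that condition (3) enters. You also correctly isolate the crux: condition (3) kills only the single index $j=h^0(X,L)-p-2$ (your $i=p$), while $(N_p)$ on $F$ requires the whole range $0\leq i\leq p$, i.e.\ $H^0(X,\wedge^{j}M_L\otimes(K_X+F))=0$ for all $j\geq h^0(X,L)-p-2$. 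Unfortunately, your proposed mechanism for this propagation does not close. The twisted syzygy sequence does show, via $H^2(X,L-F)=0$, that $H^2(X,\wedge^{k+1}M_L\otimes(L-F))$ is a quotient of $H^1(X,\wedge^{k}M_L\otimes(2L-F))$; but you then need $H^1(X,\wedge^{k}M_L\otimes(2L-F))=0$ for $k\geq 1$, and conditions (1)--(2) only control the case $k=0$ (the ``middle terms'' $\wedge^{k}V\otimes H^i(2L-F)$, not the wedge powers of $M_L$ themselves). For $k\geq 1$ this $H^1$ is essentially the Koszul group $K_{k-1,3}(X,\mathcal{O}_X(-F),L)$ --- i.e.\ a group of the very kind you are trying to prove vanishes --- so the induction is circular; moreover your base case ($i=-1$, from condition (2)) and your claimed direction (``downward on $i$'', starting from condition (3) at $i=p$) sit at opposite ends of the range, which signals that no inductive step actually connects them. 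The missing ingredient is the monotonicity of vanishing along the linear strand of a minimal free resolution: since $H^0(X,K_X+F-L)\cong H^2(X,L-F)^{\vee}=0$, one has $K_{j,0}(X,K_X+F,L)=H^0(X,\wedge^{j}M_L\otimes(K_X+F))$, and \cite{eisenbud}*{Proposition 1.9} guarantees that once $K_{j,0}=0$ for $j=h^0(X,L)-p-2$ it vanishes for all larger $j$. This is exactly how the paper bridges the gap, and some such input is unavoidable.

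A secondary problem: you dispose of the cases $q\geq 3$ by asserting that ``$L|_F$ is positive enough'' for duality on the curve to apply. That is unjustified in this generality --- in the intended applications $F$ has genus $2$ and $(L\cdot F)\leq p+4$ can be tiny, and nothing in the hypotheses bounds $\deg L|_F$ against the genus of $F$. The paper instead observes that (1) and (2) make $\mathcal{O}_X(-F)$ $3$-regular with respect to $L$, whence $H^1(X,qL-F)=H^2(X,qL-F)=0$ for all $q\geq 1$ and the groups $K_{j-1,q+1}(X,-F,L)$ vanish for $q\geq 3$ by \cite{AN10}*{Proposition 2.37}. Alternatively, in your language, the same restriction sequence you use for $q=2$ works for $q\geq 3$ once you note that $H^2(X,\wedge^{i+1}M_L\otimes((q-1)L-F))$ is dual to $H^0(X,\wedge^{r-i-1}M_L\otimes(K_X+F-(q-2)L))$, which injects into $H^0(X,\wedge^{r-i-1}M_L\otimes(K_X+F))$; but some argument is needed, and an appeal to positivity of $L|_F$ is not it.
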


\begin{proof}
Observe first that  conditions $(1)$ and $(2)$ together  imply that $\sOX(-F)$ is $3$-regular with respect to $L$ in the sense of Castelnuovo-Mumford, it follows in particular that 
$H^1(X,qL-F)=H^2(X,qL-F)=0$ for all $q\geq 1$. Assume that $(X,L)$ satisfies property $N_p$, and consider the short exact sequence
\[ 
0 \longrightarrow \sOX(-F) \longrightarrow \sOX \longrightarrow \sO_F \longrightarrow 0 \ . 
\]
This induces to an exact sequence of  graded $\operatorname{Sym}^{\bullet} H^0(X,L)$-modules
\[ 
  0 \longrightarrow \Gamma_X(\sO_X(-F),L) \longrightarrow \Gamma_X(\sO_X,L) \longrightarrow M \longrightarrow 0
\]
where $M$ is a graded module such that $M_q = H^0(F,\sO_F(qL))$ for all $q\geq 1$. In particular, from the description of the $K_{p,q}$ via the Koszul complex, we see that $K_{j,q}(F,\sO_F(L);H^0(X,L)) = K_{j,q}(M;H^0(X,L))$ for all $q\geq 2$, so that, if we can prove $K_{j,q}(M;H^0(X,L))=0$ for all $j\leq p, q\geq 2$, we are done thanks to Lemma \ref{lemma:koszvw}.
Using the long exact sequence in Koszul cohomology \cite{G84}*{Corollary 1.d.4}, arising from the above sequence and the assumption that $(X,L)$ has property $(N_p)$, we see that $K_{j,q}(M;H^0(X,L)) \subseteq K_{j-1,q+1}(X,-F,L)$ and we claim that this latter group is zero. When $q\geq 3$ this follows from the regularity of $\sOX(-F)$ with respect to $L$ \cite{AN10}*{Proposition 2.37}. If $q=2$ we can use Green's duality theorem for Koszul cohomology (see  \cite{G84}*{Theorem~2.c.6}), which gives 
\[
 K_{j-1,3}(X,-F,L)^{\vee} \equ  K_{h^0(X,L)-j-2,0}(X,K_X+F,L) \ ,
\]
To conclude, we observe that $H^0(K_X+F-L)=H^2(L-F)=0$ thanks to condition (2), and looking at the Koszul complex we immediately see that $K_{j,q}(X,K_X+F,L)=0$ for all $j\leq 0$. Then \cite{eisenbud}*{Proposition 1.9} shows that condition (3) implies the vanishing $K_{h^0(X,L)-j-2,0}(X,K_X+F,L)$ for all $j\leq p$. 
\end{proof}

\begin{remark}\label{remark:stableML} 
To check condition $(3)$ we can sometimes use the stability considerations for the bundle $M_L$. 
Assuming that $M_L$ is slope stable with respect to $L$, the vanishing that we want is implied by
\[
  \left( c_1(\wedge^{h^0(X,L)-p-2}M_L\otimes (K_X+F))\cdot L  \right) \ < \ 0 \ .
\]
If we denote by $r=h^0(X,L)-1$ the rank of $M_L$ and $i=h^0(X,L)-p-2$, then the intersection number on the left is computed as follows:
\begin{align*} 
c_1(\wedge^{i}M_L\otimes (K_X+F))\cdot L & = c_1(\wedge^{i}M_L)\cdot L + \binom{r}{i}((K_X+F)\cdot L) 
= \binom{r-1}{i-1}c_1(M_L)\cdot L + \binom{r}{i}((K_X+F)\cdot L) \\
&  = -\binom{r-1}{i-1}(L^2) + \binom{r}{i}((K_X+F)\cdot L) = -\binom{r-1}{i-1}\left[ (L^2) - \frac{r}{i}((K_X+F)\cdot L)  \right] \\
& = -\binom{r-1}{i-1}\left[ (L^2) - \frac{h^0(X,L)-1}{h^0(X,L)-p-2}((K_X+F)\cdot L) \right]   
\end{align*}
In particular, condition $(3)$ in Proposition~\ref{lem:surfacetocurve} is satisfied, if $M_L$ is slope stable with respect to  $L$, and 
\[
 (h^0(X,L)-p-2)\cdot (L^2) \ > \ (h^0(X,L)-1)\cdot ((K_X+F)\cdot L)\ .
 \]
\end{remark}

As an illustration of this technique, we prove the following extension of Proposition \ref{prop:easier direction}:

\begin{proposition}\label{thm:one direction}
Let $X$ be a K3 surface, an abelian surface or an Enriques surface, and $L$ an ample and globally generated line bundle such that $(L^2)\geq 3p+6$ (if K3), $(L^2)\geq 3p+14$ (if abelian) or $(L^2)\geq 3p+7$ (if Enriques). Suppose there exists a smooth curve $F\subseteq X$ of genus two with $(L\cdot F)\leq p+4$ 
, then  $(X,L)$ does not satisfy property $(N_p)$.
\end{proposition}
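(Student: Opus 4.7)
The plan is to argue by contradiction: suppose $(X, L)$ satisfies property $(N_p)$, transfer this to the curve $F$ via Proposition~\ref{lem:surfacetocurve}, and then contradict it through the sharp converse to Green's theorem on a hyperelliptic curve. Every smooth curve of genus two is hyperelliptic, and classically one knows that for a smooth hyperelliptic curve $C$ of genus $g \geq 2$ and a globally generated line bundle $A$ of degree $d$, the pair $(C, A)$ has property $(N_p)$ if and only if $d \geq 2g+1+p$. Hence the hypothesis $\deg \mathcal{O}_F(L) = (L \cdot F) \leq p+4 < 2 \cdot 2 + 1 + p$ will produce the required contradiction once $(N_p)$ is transferred to $F$.

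The work then lies in checking the three hypotheses of Proposition~\ref{lem:surfacetocurve}. Adjunction together with the numerical triviality of $K_X$ (using $2 K_X \equiv 0$ on Enriques) yields $F^2 = 2g(F) - 2 = 2$. For conditions $(1)$ and $(2)$, Serre duality together with $(F - L) \cdot L = (L \cdot F) - (L^2) < 0$ immediately gives $H^2(X, L - F) = 0$; the $H^1$ vanishings for $L - F$ and $2L - F$ follow from Kawamata--Viehweg-type vanishing, since $(L-F)^2 = (L^2) - 2(L\cdot F) + 2$ and $(2L - F)^2 = 4(L^2) - 4(L\cdot F) + 2$ are positive under the given hypotheses. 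Condition $(3)$ is the main obstacle: one invokes Remark~\ref{remark:stableML}, which reduces it to the slope stability of the syzygy bundle $M_L$ with respect to $L$ together with the numerical inequality $(h^0(X, L) - p - 2)(L^2) > (h^0(X, L) - 1)((K_X + F) \cdot L)$. Slope stability of $M_L$ on K3, abelian, and Enriques surfaces under our hypotheses is available in the literature, and substituting the Riemann--Roch expressions $h^0(X, L) = (L^2)/2 + 2$ (K3), $(L^2)/2$ (abelian), $(L^2)/2 + 1$ (Enriques) together with $((K_X + F) \cdot L) = (L \cdot F) \leq p+4$ turns this numerical inequality into precisely the explicit lower bounds $(L^2) \geq 3p+6$, $3p+14$, $3p+7$ stated in the proposition.

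With the three conditions verified, Proposition~\ref{lem:surfacetocurve} together with Lemma~\ref{lemma:koszvw} and the surjectivity of $H^0(X, L) \to H^0(F, \mathcal{O}_F(L))$ supplied by condition $(1)$ implies that $(F, \mathcal{O}_F(L))$ satisfies $(N_p)$ in the ordinary sense, and the sharp hyperelliptic statement above then forces $(L \cdot F) \geq p + 5$, contradicting the hypothesis. The technical difficulty is concentrated in two places: calibrating the bounds on $(L^2)$ by carefully matching the numerical inequality of Remark~\ref{remark:stableML} with the Riemann--Roch expression for $h^0(X, L)$ in each of the three geometries, and citing an appropriate slope-stability statement for $M_L$ in each class of surfaces; the sharp hyperelliptic converse then does the rest automatically.
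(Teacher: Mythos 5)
Your overall strategy coincides with the paper's: transfer $(N_p)$ from $(X,L)$ to the genus-two curve $F$ via Proposition~\ref{lem:surfacetocurve}, verify condition $(3)$ through Remark~\ref{remark:stableML} and the stability of $M_L$, and then contradict the Green--Lazarsfeld result that a line bundle of degree at most $p+4$ on a hyperelliptic curve of genus $\geq 2$ never satisfies $(N_p)$. The calibration of the bounds $3p+6$, $3p+14$, $3p+7$ against the numerical inequality of Remark~\ref{remark:stableML} is also exactly what the paper does.

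However, there is a genuine gap in your verification of condition $(1)$. You claim that $H^1(X,L-F)=0$ and $H^1(X,2L-F)=0$ follow from Kawamata--Viehweg-type vanishing because $(L-F)^2>0$ and $(2L-F)^2>0$. Positivity of the self-intersection does not make these divisors nef, and Kawamata--Viehweg (or Ramanujam) vanishing requires nefness: there is nothing in the hypotheses preventing a curve $\Gamma$ with $(L\cdot\Gamma)$ small and $(F\cdot\Gamma)$ large, in which case $L-F$ is big but not nef and $H^1(X,L-F)$ may well be nonzero. The paper does not prove this vanishing; it makes a case distinction. If $H^1(X,L-F)\neq 0$, then the image of the restriction map $H^0(X,L)\to H^0(F,\sO_F(L))$ has dimension at most $h^0(F,\sO_F(L))-1\leq p+2$, so the linear span of $F$ in $\PP^N$ has dimension at most $p+1$ and the Castelnuovo-type argument of Proposition~\ref{prop:easier direction} already forces $H^1(F,\sO_F)=0$, a contradiction with $p_a(F)=2$. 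Only in the remaining case $H^1(X,L-F)=0$ does one proceed as you do, and then $H^1(X,2L-F)=0$ is deduced not from a vanishing theorem but by restricting $\sO_X(2L-F)$ to a smooth curve $C\in|L|$, where $\deg\sO_C(2L-F)>2g(C)-2$ kills $H^1(C,\sO_C(2L-F))$. This fallback to the linear-span argument is a necessary additional idea that your write-up is missing; without it, condition $(1)$ of Proposition~\ref{lem:surfacetocurve} is simply not established.
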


\begin{proof}   
  Suppose that $(X,L)$ satisfies property $(N_p)$, in particular $L$ is very ample.  We start with computing  $h^0(F,\sO_F(L))$. Since $\sO_F(L)$ is very ample, we must have  $(L\cdot F) \geq 3$, so that Riemann--Roch gives $h^0(F,\sO_F(L)) = (F\cdot L) -1 \leq p + 3$.
Since $L$ is ample, we have $H^1(X,L)=0$ by Kodaira vanishing. Consider then the exact sequence
  \[ 
  0 \to H^0(X,L-F) \to H^0(X,L) \to H^0(F,\sO_F(L)) \to H^1(X,L-F) \to 0 \ .
  \]
  We see that if $H^1(X,L-F)\ne 0$, then the image of the restriction map on global sections has dimension at most $h^0(F,sO_F(L))-1\leq p+2$, hence we are done using the same proof as in Proposition \ref{prop:easier direction}. Therefore, we can assume without loss of generality that $H^1(X,L-F)= 0$. 
 Now we try to check the conditions of Lemma \ref{lem:surfacetocurve}: if $C\in|L|$ is a smooth curve then we can look at the short exact sequence
\[
   0\to \sO_X(L-F)\to\sO_X(2L-F)\to\sO_C(2L-F)\to 0\ .
\]
We see that $((2L-F)\cdot L) > (L^2)$, so that Riemann-Roch implies $H^1(C,\sO_C(2L-F))=0$, and then $H^1(X,L-F)= 0$ gives $H^1(X,2L-F)= 0$ as well. Serre duality and $(L\cdot ( L-F))>0$ imply that we have $H^2(X,L-F)=0$.

Thus, we are left with checking that condition $(3)$ of  Lemma~\ref{lem:surfacetocurve} holds. One way to verify this according to  Remark~\ref{remark:stableML} is to  show that 
the syzygy bundle $M_L$  is stable. Note that under the conditions in the statement we have the inequality
\[
 (h^0(X,L)-p-2)\cdot (L^2) \ > \ (h^0(X,L)-1)\cdot (F\cdot L)\ ,
 \]
The stability condition on $M_L$ was proved  in \cite{Camere2012}, hence, by Lemma~\ref{lem:surfacetocurve}  $(F,\sO_F(L))$ has property $(N_p)$. 
However a line bundle of degree $d\leq p+4$ on a hyperelliptic curve of genus at least $2$ can never satisfy $(N_p)$ by \cite{GL88}*{Theorem~2}.
\end{proof}

Finally, we establish the equivalence between $(2)$ and $(3)$ of Theorem~A on abelian and K3 surfaces under a light numerical condition. 

\begin{proposition}\label{prop:seshadri}
Let  $X$ be a surface with $K_X=0$ and $L$  an ample line bundle on $X$ with $(L^2)>\frac{8}{7}(p+2)^2$  for a given  natural number  $p$. 
Then conditions $(2)$ and $(3)$ in Theorem~A are equivalent.
\end{proposition}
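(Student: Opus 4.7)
The direction (3) $\Rightarrow$ (2) is immediate: if $F\subseteq X$ is a smooth elliptic curve with $F^2=0$ through a very general point $x$ and with $L\cdot F\leq p+2$, then $\epsilon(L;x)\leq L\cdot F\leq p+2$, contradicting $\epsilon(L;x)>p+2$.

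For (2) $\Rightarrow$ (3) I argue by contrapositive. Suppose $\epsilon(L;x)\leq p+2$ at a very general $x$. Since $L^2>\tfrac{8}{7}(p+2)^2>(p+2)^2\geq\epsilon(L;x)^2$, the Seshadri constant is computed by an irreducible Seshadri-exceptional curve $E\subseteq X$ through $x$, with $L\cdot E=m\,\epsilon(L;x)\leq m(p+2)$, where $m:=\mathrm{mult}_xE\geq 1$. Because $x$ is very general, $E$ varies with $x$ in an algebraic family covering $X$, so $E^2\geq 0$. Moreover, $E$ is a moving curve on a surface with $K_X\equiv 0$; since rational curves on such $X$ are either absent (abelian case) or rigid (K3 case, by Mori), $E$ has geometric genus $g(E)\geq 1$.

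The heart of the argument is a numerical squeeze. From the adjunction formula $p_a(E)=E^2/2+1$ and the singularity bound $g(E)\leq p_a(E)-\binom{m}{2}$ I obtain Xu's inequality $E^2\geq m(m-1)$, while Hodge's index theorem yields $L^2\cdot E^2\leq(L\cdot E)^2\leq(p+2)^2m^2$. Combined with $L^2>\tfrac{8}{7}(p+2)^2$, this forces
\[ m(m-1)\;\leq\;E^2\;<\;\tfrac{7}{8}m^2, \]
hence $m<8$. Since $E^2=2p_a(E)-2$ is an even integer, a direct case check for $m\in\{1,\ldots,7\}$ shows that the only even integer in each range $[m(m-1),\tfrac{7}{8}m^2)$ is $m(m-1)$ itself; thus $E^2=m(m-1)$ and $g(E)=1$, so the normalization $\widetilde E\to E$ is a smooth elliptic curve in every case.

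It remains to produce the desired $F$. If $m=1$, then $E$ itself is a smooth divisor with $E^2=0$ and arithmetic genus one; on an abelian surface this gives $F$ directly via translation, and on a K3 surface $|E|$ is a basepoint-free elliptic pencil whose general member is a smooth elliptic curve through $x$. For $m\geq 2$, the elliptic normalization $\widetilde E\to X$ does the work: on an abelian surface it factors through a one-dimensional abelian subvariety and therefore has smooth image, contradicting $m\geq 2$; on a K3 surface one extracts the required pencil $|F|$ with $L\cdot F\leq p+2$ from a Saint-Donat-style analysis of the linear system $|E|$, using that the two-parameter family of such singular genus-one curves is in excess of the expected deformation dimension. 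The main technical obstacle in this plan is this last step for K3 surfaces; the abelian case and the numerical squeeze itself are handled by elementary means.
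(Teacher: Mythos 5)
Your argument for $(3)\Rightarrow(2)$ and the overall strategy for the converse (pass to a Seshadri-exceptional curve $E$ through the very general point, combine a lower bound on $E^2$ coming from its singularity at $x$ with the Hodge-index upper bound $L^2\cdot E^2\leq (L\cdot E)^2$) is exactly the paper's strategy. The problem is the strength of the lower bound you use. From $g(E)\geq 1$ and $g(E)\leq p_a(E)-\binom{m}{2}$ you only get $E^2\geq m(m-1)$, and as your own case check shows, this squeeze does \emph{not} close: it leaves the residual possibility $2\leq m\leq 7$, $E^2=m(m-1)$, $g(E)=1$ (an irreducible genus-one curve with an ordinary $m$-fold point moving through the very general point). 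You dispose of this on abelian surfaces via rigidity of maps from elliptic curves, but for K3 surfaces you explicitly leave it as ``the main technical obstacle,'' with only a sketched hope of a Saint-Donat-style deformation count. That is a genuine gap: the entire content of the proposition in the range $m\geq 2$ rests on ruling out precisely this configuration, and the hand-waved excess-dimension argument is not carried out (note that genus-one curves on a K3 do move in one-parameter families covering the surface, so a naive dimension count does not immediately exclude this case).

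The missing ingredient is the theorem of Knutsen--Syzdek--Szemberg (\cite{KSS}*{Theorem 2.1}, cited in the paper): for a Seshadri curve through a very general point of a non-uniruled surface with $m=\mult_x(F)\geq 2$, one has the stronger bound $(F^2)\geq m^2-m+2$, i.e.\ your $m(m-1)$ improved by $+2$ (morally because the normalization is non-rational, hence of gonality $\geq 2$, and the curve moves with $x$). With this bound the squeeze closes completely and uniformly: $m^2-m+2\leq E^2<\tfrac{7}{8}m^2$ is equivalent to $(m-4)^2<0$, which is absurd, so no curve with $m\geq 2$ can compute a submaximal Seshadri constant $\leq p+2$ at all; equivalently, the paper's chain
\[
\epsilon(L;x)^2=\frac{(L\cdot F)^2}{m^2}\geq\frac{(L^2)(F^2)}{m^2}>\frac{8}{7}(p+2)^2\cdot\frac{m^2-m+2}{m^2}\geq (p+2)^2
\]
holds because $\tfrac{8}{7}\cdot\tfrac{m^2-m+2}{m^2}\geq 1$ reduces to $(m-4)^2\geq 0$. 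This is also exactly where the constant $\tfrac{8}{7}$ comes from (it is sharp at $m=4$). So: replace your bound $E^2\geq m(m-1)$ by the KSS bound for $m\geq 2$, keep your $m=1$ analysis, and the K3 case you could not handle simply never arises.
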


\begin{proof}
The implication $(3)\Longrightarrow (2)$ follows immediately from the definition of the Seshadri constant; therefore,  it remains to deal with the converse implication.  Let $x\in X$ be a very general point. If the Seshadri constant is maximal, i.e. $\epsilon(L;x)=\sqrt{(L^2)}$, then $(3)$ follows from  the given lower bound  on $(L^2)$.
Assume that $\epsilon(L;x)$ is submaximal, then there exists a curve $F\subseteq X$ with $m\deq \mult_x(F)\geq 1$ giving the Seshadri constant $\epsilon(L;x)$. 
If $m=1$ and $(L\cdot F)\leq p+2$, then the Hodge index theorem yields  $(F^2)\leq 0$. However, since $x\in X$ was chosen to be very general, it does not lie  on any negative curve,
in particular we obtain  that $(F^2)=0$. Now, we can assume $F$ to be irreducible and reduced, and if $X$ is an abelian surface $F$ must be also smooth, since there are no rational curves on $X$. If instead $X$ is a K3 surface, we can again suppose that $F$ is smooth via \cite{beauville}*{Proposition VIII.13}.  In both cases, we get to a contradiction of condition $(2)$, therefore,  either $m=1$ and $(L\cdot F)\geq p+3$, or $m\geq 2$.
 
 In the first case condition  $(3)$ is immediate, so we can assume that $m\geq 2$. First, note that $X$ is not uniruled, and then \cite{KSS}*{Theorem 2.1} implies that
 \[
  (F^2)  \dgeq  m^2 -m +2\ .
 \]
 Next, the definition of the Seshadri constant and the Hodge index theorem imply the  sequence of inequalities
 \[
  \epsilon(L;x)^2 \equ  \frac{(L\cdot F)^2}{m^2}  \dgeq  \frac{(L^2) \cdot (F^2)}{m^2} \, > \, \frac{8}{7}(p+2)^2\left(\frac{m^2-m+2}{m^2}\right) \dgeq (p+2)^2 \ ,
 \]
 where the last inequality is valid for any $m\geq 1$.  In particular, $\epsilon(L;x)> p+2$, which yields $(3)$.
\end{proof}

\section{Higher syzygies on K3 surfaces}

We devote this section to the proof of Theorem A.  First  we  present a stronger variant of the equivalence between $(1)$ and $(2)$ in Theorem A for K3 surfaces. The fundamental principle here is that Koszul cohomology for K3 surfaces can be reduced to curves:

\begin{lemma}\label{lem:Koszul}
	Let $(X,L)$ be a polarized K3 surface, with $L$ ample and globally generated. Let $C\in |L|$ be any smooth curve, then
	\[K_{p,q}(X,L) \cong K_{p,q}(C,\omega_C) \qquad \text{ for all } p,q  \]  
\end{lemma}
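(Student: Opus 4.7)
The plan is a classical restriction-to-a-hyperplane-section argument, made possible by the identification $\omega_C \cong L|_C$ given by adjunction (since $K_X = 0$). Set $V := H^0(X, L)$, $W := H^0(C, \omega_C)$, $N := \bigoplus_{q \geq 0} H^0(X, qL)$, and $M := \bigoplus_{q \geq 0} H^0(C, q\omega_C)$, and let $s \in V$ be the section defining $C$. Tensoring the sequence $0 \to \sO_X(-L) \to \sO_X \to \sO_C \to 0$ with $qL$ and taking global sections, the vanishings $H^1(X, \sO_X) = 0$ (the K3 hypothesis) together with $H^1(X, qL) = 0$ for $q \geq 1$ (Kodaira vanishing) produce a short exact sequence of graded $\Sym^{\bullet} V$-modules
\[
 0 \to N(-1) \xrightarrow{\cdot s} N \to M \to 0.
\]

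Next I would pass to Koszul cohomology with respect to $V$. Using the standard identification $K_{p,q}(N(-1); V) = K_{p, q-1}(N; V) = K_{p, q-1}(X, L)$, the long exact sequence has connecting maps $K_{p, q-1}(X, L) \to K_{p, q}(X, L)$ that coincide with multiplication by $s \in V$. A classical computation with the Koszul differential, relying on the identity $d(s \wedge \alpha) = s \cdot \alpha - s \wedge d\alpha$, shows that multiplication by any element of $V$ acts trivially on Koszul cohomology. The long exact sequence therefore breaks into short ones
\[
 0 \to K_{p,q}(X, L) \to K_{p, q}(M; V) \xrightarrow{\partial} K_{p-1, q}(X, L) \to 0.
\]

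I would then compute the middle term by applying Lemma~\ref{lemma:koszvw} to $0 \to \CC s \to V \to W \to 0$. Since $\wedge^i(\CC s) = 0$ for $i \geq 2$, this gives $K_{p,q}(M; V) \cong K_{p,q}(C, \omega_C) \oplus K_{p-1, q}(C, \omega_C)$. Combining with the short exact sequence above, a rank count and induction on $p$ (with trivial base case $p = -1$) yields the numerical equality $\dim K_{p, q}(X, L) = \dim K_{p, q}(C, \omega_C)$.

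The main obstacle is to upgrade this equality to a canonical isomorphism. For this I would use the natural restriction map $r\colon K_{p, q}(X, L) \to K_{p, q}(C, \omega_C)$ induced by the surjections $V \twoheadrightarrow W$ and $N \twoheadrightarrow M$ on Koszul complexes, and complete the induction by an explicit cocycle computation on representatives of the form $s \wedge \alpha$: this shows that, composing the inclusion $K_{p-1, q}(C, \omega_C) \hookrightarrow K_{p,q}(M; V)$ from Lemma~\ref{lemma:koszvw} with $\partial$ and with the inductive isomorphism $r$ in degree $p-1$, one recovers the identity on $K_{p-1, q}(C, \omega_C)$. Hence $\partial$ restricted to the summand $K_{p-1, q}(C, \omega_C)$ is injective, which forces $K_{p, q}(X, L) = \ker \partial$ to project isomorphically onto $K_{p, q}(C, \omega_C)$ via $r$.
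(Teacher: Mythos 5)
Your argument is correct, but it takes a different route from the paper in the sense that the paper proves this lemma in one line, by quoting the Lefschetz theorem for Koszul cohomology (\cite{AN10}*{Theorem 2.20}) together with the vanishing $H^1(X,qL)=0$ for all $q\geq 0$. What you have written is essentially a self-contained proof of that cited theorem in the case at hand: the short exact sequence $0\to N(-1)\xrightarrow{\cdot s}N\to M\to 0$ (which uses exactly the same vanishings, plus connectedness of $C$ in degree $0$ and the identification $W=V/\CC s\cong H^0(C,\omega_C)$ coming from $H^1(X,\sO_X)=0$), the fact that multiplication by $s\in V$ is null-homotopic on Koszul cohomology so that the long exact sequence splits into short ones, and the decomposition $K_{p,q}(M;V)\cong K_{p,q}(C,\omega_C)\oplus K_{p-1,q}(C,\omega_C)$ from Lemma~\ref{lemma:koszvw} applied to $0\to\CC s\to V\to W\to 0$. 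The dimension count plus induction on $p$ already establishes the lemma as stated, since only an abstract isomorphism of finite-dimensional vector spaces is claimed; your final step, identifying the isomorphism with the natural restriction map via the cocycle computation on classes of the form $s\wedge\alpha$, is not needed for the statement but is a worthwhile strengthening, as it is the restriction map that one implicitly uses when transporting Green's conjecture for $C$ back to $(X,L)$ in Corollary~\ref{cor:AF}. In short: the paper buys brevity by outsourcing to \cite{AN10}, while your version makes the mechanism (and the precise role of each vanishing hypothesis) explicit.
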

\begin{proof}
	This follows from a general Lefschetz theorem for Koszul cohomology (see \cite{AN10}*{Theorem 2.20}) together with the fact that $H^1(X,L^q)=0$ for all $q\geq 0$, since we are on a K3 surface.
\end{proof}

Moreover, we know from results of Aprodu-Farkas that Green's conjecure holds for every smooth curve on any K3 surface, so that we have:

\begin{corollary}[Aprodu--Farkas]\label{cor:AF}
	Let $(X,L)$ be a polarized K3 surface with $L$ ample and globally generated. Then $(X,L)$ has property $N_p$ if and only if for one (or any) smooth curve $C\in |L|$ one has $p<\operatorname{Cliff}(C)$. 
\end{corollary}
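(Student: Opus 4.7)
The plan is to combine the two ingredients already in hand. First I would spell out that on a K3 surface the adjunction formula gives $\omega_C \cong L|_C$ for any smooth $C\in|L|$, since $K_X$ is trivial. Applying Lemma~\ref{lem:Koszul} then yields isomorphisms $K_{p,q}(X,L) \cong K_{p,q}(C,\omega_C)$ for all $p,q$, where the right-hand side is exactly the Koszul cohomology that governs syzygies of the canonically embedded curve $C$.

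Next, I would translate property $(N_p)$ through this identification. By definition, $(X,L)$ satisfies $(N_p)$ if and only if $K_{i,q}(X,L)=0$ for all $i\leq p$ and $q\geq 2$, which under the isomorphism is equivalent to the vanishing $K_{i,q}(C,\omega_C)=0$ for the same range. Since $C$ is a smooth curve of genus $g = 1 + (L^2)/2 \geq 2$ lying on a K3 surface, Green's conjecture for $C$ holds by the theorem of Aprodu and Farkas \cite{AF11}. Hence these vanishings are equivalent to the inequality $p < \operatorname{Cliff}(C)$.

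Finally, I would address the clause ``for one (or any) smooth curve''. Since $L$ is ample and globally generated, a general member of $|L|$ is smooth by Bertini. The left-hand side $K_{p,q}(X,L)$ does not depend on any choice of curve, so the condition $p<\operatorname{Cliff}(C)$ is automatically independent of the smooth $C\in|L|$ chosen. This is consistent with the classical result of Green--Lazarsfeld that $\operatorname{Cliff}(C)$ is constant as $C$ varies among the smooth members of a base-point-free linear system on a K3 surface; one could invoke this directly, but it also follows a posteriori from the chain of equivalences above.

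The argument is essentially a bookkeeping exercise: the genuine content lies in Lemma~\ref{lem:Koszul} (Lefschetz-type reduction, requiring $H^1(X,qL)=0$ for $q\geq 0$, which is automatic on a K3) and in the Aprodu--Farkas theorem. No further obstacle arises, so the proof should be short.
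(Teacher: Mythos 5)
Your argument is correct and is essentially the paper's own: Lemma~\ref{lem:Koszul} identifies $K_{p,q}(X,L)$ with $K_{p,q}(C,\omega_C)$, and the Aprodu--Farkas theorem (Green's conjecture for curves on K3 surfaces) converts the vanishing defining $(N_p)$ into the inequality $p<\operatorname{Cliff}(C)$. The observation that the ``for one (or any)'' clause follows because $K_{p,q}(X,L)$ is independent of the chosen curve is a nice touch, but the route is the same.
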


Then, it becomes important to study the Clifford index of curves on K3 surfaces. For example, we know that for a smooth curve $C$ we have $\operatorname{Cliff}(C)\in \{ \operatorname{gon}(C)-2,\operatorname{gon}(C)-3 \}$. In the K3 case one can make a very precise distinction (see \cite{Kn01}*{Proposition 8.6}):

\begin{proposition}
	Let $(X,L)$ be a K3 surface, with $L$ big and globally generated. Suppose that for all smooth curves $C\in |L|$ one has $\operatorname{Cliff}(C)=\operatorname{gon}(C)-3$.  Then $L\cong \sOX(2D+\Gamma)$, where $D\subseteq X$ is an effective divisor  and $\Gamma\subseteq X$ is a smooth curve such that:
	\[ (D^2) \geq 1 \qquad (\Gamma^2)=-2 \qquad (\Gamma\cdot D) = 1 \]
	In this case, for any smooth curve $C\in |L|$ we have
	\[ \operatorname{Cliff}(C) = (D^2)-1 \qquad \operatorname{gon}(C) = (D^2)+2 \]   
\end{proposition}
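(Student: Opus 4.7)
The plan is to reproduce the argument of Knutsen's Proposition 8.6, whose engine is the Green--Lazarsfeld extension theorem: on a K3 surface, any line bundle on a smooth curve $C \subseteq X$ that computes $\Cliff(C)$ is the restriction of an effective divisor class $D$ on $X$ with $|D|$ base-point-free. The conclusion is essentially an analysis of what $D$ can look like once one assumes that it is the gonality, not the Clifford index, that sits three units below its ``generic'' value.

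First I would fix a smooth $C \in |L|$ of genus $g = (L^2)/2 + 1$ and a pencil $A$ on $C$ realizing $k := \gon(C)$. Since $\Cliff(A) = k-2$ while $\Cliff(C) = k-3$ by hypothesis, the pencil $A$ does not compute the Clifford index, and one can produce a line bundle $M$ on $C$ with $\Cliff(M) = k-3$ and $h^0(C,M) \geq 3$. Invoking the Green--Lazarsfeld extension theorem yields an effective divisor $D$ on $X$ with $\sO_X(D)|_C \cong M$ and $|D|$ base-point-free; the Hodge index theorem then constrains the intersection pairings of $D$ and $L-D$.

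Next I would translate the equalities $\Cliff(M) = k-3$ and $\gon(C) = k$ into numerical conditions on $D$ using Riemann--Roch on $X$, the adjunction sequence
\[
0 \longrightarrow \sO_X(D-L) \longrightarrow \sO_X(D) \longrightarrow \sO_C(D) \longrightarrow 0,
\]
and the vanishing $H^1(X,\sO_X(D)) = 0$ available on a K3. This pins down $(D^2)$, $(L \cdot D)$ and $h^0(X,D)$ up to a small number of cases, and the residual class $\Gamma := L - 2D$ is forced to satisfy $(\Gamma \cdot D) = 1$ and $(\Gamma^2) = -2$. Combining Hodge index with the fact that the defect $\Cliff = \gon - 3$ persists for \emph{every} smooth member of $|L|$ then isolates a single smooth rational curve representing $\Gamma$, yielding $L \equiv 2D + \Gamma$. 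Reading off the degree and $h^0$ of $\sO_X(D)|_C$ via Riemann--Roch on $C$ finally gives the stated identities $\Cliff(C) = (D^2) - 1$ and $\gon(C) = (D^2) + 2$.

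The main obstacle is this last reduction: showing that $L - 2D$ is exactly one smooth $(-2)$-curve meeting $D$ transversely at a single point, rather than a reducible or non-reduced effective divisor with the same intersection numbers. Ruling out the alternatives demands careful intersection-theoretic bookkeeping, using both the constancy of the Clifford--gonality defect across $|L|$ and a Bertini-type argument to exclude decompositions of $\Gamma$ that would violate the hypothesis on some other smooth $C' \in |L|$. This is the technical heart of \cite{Kn01}*{Proposition~8.6}, and I would follow Knutsen's treatment at this step rather than reinvent it.
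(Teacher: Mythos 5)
Your outline is consistent with the paper's treatment of this statement: the paper offers no proof at all, quoting the result verbatim from \cite{Kn01}*{Proposition 8.6}, and your sketch is a faithful reconstruction of Knutsen's own argument (extend a Clifford-computing bundle with $h^0\geq 3$ to a base-point-free divisor $D$ on $X$ via Green--Lazarsfeld, then pin down $L-2D$ by intersection theory), deferring to Knutsen at exactly the technical step where the paper defers entirely. The only quibble is your opening claim that \emph{any} Clifford-computing bundle on $C$ extends to $X$ --- the extension theorem only produces \emph{some} such bundle that extends --- but this does not affect your argument, which only needs one.
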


\begin{remark}\label{notample}
In particular, we see that the line bundle $L$ described in the above proposition is not ample, since $(L\cdot \Gamma) = 2(D\cdot \Gamma) + (\Gamma^2) = 0$. 
\end{remark}

\begin{theorem}\label{thm:gle}
	Let $X$ be a K3 surface and $L$ be an ample line bundle on $X$. Then
	\begin{enumerate}
		\item[(a)] If $(L^2)>\frac{1}{2}(p+4)^2$ then  conditions $(1)$ and $(2)$ in Theorem A are equivalent.
		\item[(b)] If $(L^2)> \frac{1}{4}(p+6)^2$, then the conditions $(1)$ and $(2)$ in 
		Theorem A are equivalent with the exception of the case when  there exists a smooth genus two curve $F$ on $X$ such thata $1\leq (L\cdot F)\leq p+4$.
	\end{enumerate}
\end{theorem}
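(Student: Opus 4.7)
The plan is to reduce property $(N_p)$ on the K3 pair $(X,L)$ to a Clifford-index question about a smooth curve $C\in|L|$, and then to exploit the structure of Clifford divisors on K3 surfaces. The direction $(1)\Rightarrow(2)$ of Theorem~A is handled directly by Proposition \ref{prop:easier direction}, since a smooth elliptic curve has arithmetic genus one. For the converse, I fix a smooth $C\in|L|$; by Corollary \ref{cor:AF} of Aprodu--Farkas, $(X,L)$ has $(N_p)$ precisely when $\Cliff(C)\geq p+1$. Hence the task reduces to showing that $\Cliff(C)\leq p$ forces the existence of a smooth elliptic curve (or, in case (b), possibly a smooth genus-two curve) on $X$ of low $L$-degree.

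Suppose then that $\Cliff(C)\leq p$. Since $L$ is ample, Remark \ref{notample} combined with the preceding Knutsen-type proposition shows that $\Cliff(C) = \gon(C) - 3$ cannot hold for every smooth curve in $|L|$; hence the Clifford index of some smooth $C\in|L|$ is computed by a line bundle that extends from the surface. The classical extension theorem of Green--Lazarsfeld and Donagi--Morrison then supplies an effective divisor $D\subseteq X$ with $h^0(X,D)\geq 2$ and $h^0(X, L - D)\geq 2$, whose restriction to $C$ computes $\Cliff(C)$. Riemann--Roch on the K3 surface translates this into the key numerical identity $(L\cdot D) = \Cliff(C) + (D^2) + 2 \leq p + (D^2) + 2$.

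Inserting this bound into the Hodge index inequality $(L\cdot D)^2 \geq (L^2)(D^2)$ and using the hypothesis on $(L^2)$ reduces matters to a quadratic inequality in $d := (D^2)$. Since $d$ is even and nonnegative on a K3 surface, and the large-$d$ branch is excluded by $h^0(X, L - D)\geq 2$, one concludes $d = 0$ in case (a) and $d \in \{0, 2\}$ in case (b). Saint-Donat's analysis of effective divisors of self-intersection $0$ or $2$ on a K3 surface then produces either a smooth elliptic curve $F$ with $(F^2) = 0$ and $(L\cdot F)\leq p+2$, contradicting (2), or (only in case (b), when $d = 2$) a smooth genus-two curve $F$ with $(L\cdot F)\leq p+4$, which is precisely the exception in the statement. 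The main technical obstacle will be invoking the Green--Lazarsfeld/Donagi--Morrison theorem in the sharp form yielding the identity $(L\cdot D) = \Cliff(C) + (D^2) + 2$, and then replacing the abstract divisor $D$ by an irreducible smooth elliptic or genus-two curve while preserving the upper bound on $(L\cdot F)$.
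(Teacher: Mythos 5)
Your proposal is correct in outline but takes a genuinely different route from the paper. After the common reduction to the statement that failure of $(N_p)$ forces $\Cliff(C)\leq p$ for a smooth $C\in|L|$ (Corollary \ref{cor:AF}), the paper passes to the \emph{gonality}: since $L$ is ample, Remark \ref{notample} gives $\Cliff(C)=\gon(C)-2$, hence a pencil of degree $\leq p+2$ on $C$, which in case (a) is extended to an elliptic fibration of $X$ by Serrano's extension theorem, and in case (b) --- where the numerical hypothesis is too weak for Serrano's theorem as stated --- by a finer analysis of the decomposition $C=F_x+P_x$ occurring inside Serrano's proof, with the Hodge index theorem then forcing $(F_x^2)\in\{0,2\}$. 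You instead invoke the Green--Lazarsfeld/Donagi--Morrison theory of Clifford divisors to produce $D\subseteq X$ with $(L\cdot D)=\Cliff(C)+(D^2)+2$ and run the Hodge-index computation on $D$ directly; this treats (a) and (b) uniformly and is essentially the alternative the authors themselves flag (``we could proceed in various ways: \dots\ using a result of Green--Lazarsfeld \dots\ or Knutsen's [Kn01, Theorem 1.3]'') before choosing Serrano precisely because his theorem is not specific to K3 surfaces. What your route buys is a shorter, more numerical argument with no case split between (a) and (b) until the very end; what it gives up is the potential applicability to other surfaces.

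Three points in your sketch need to be made precise for the argument to close. First, the existence of a Clifford divisor computing $\Cliff(C)$ requires $\Cliff(C)<\lfloor (g-1)/2\rfloor$; this does follow from the hypotheses on $(L^2)$ since $g-1=(L^2)/2$, but it must be checked. Second, the exclusion of the large-$(D^2)$ branch is not a consequence of $h^0(X,L-D)\geq 2$ alone: you must normalize $D$ (replacing it by $L-D$ if necessary) so that $(D^2)\leq (D\cdot(L-D))=\Cliff(C)+2$. With $c=\Cliff(C)$ and $d=(D^2)$, the function $d\mapsto (c+d+2)^2/d$ is then decreasing on $[2,c+2]$, and the Hodge index bound $(L^2)\leq (c+d+2)^2/d$ yields $(L^2)\leq\tfrac12(c+4)^2\leq\tfrac12(p+4)^2$ if $d\geq 2$ in case (a), and $(L^2)\leq\tfrac14(c+6)^2\leq\tfrac14(p+6)^2$ if $d\geq 4$ in case (b), the desired contradictions. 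Third, the passage from $D$ to a smooth irreducible elliptic or genus-two curve of the same (or smaller) $L$-degree uses that Clifford divisors may be taken base-point free together with Saint-Donat's description of base-point-free systems with $(D^2)=0$ or $2$. All three are standard, so your approach does yield a complete proof.
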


\begin{remark}
	If $f:X\rightarrow\PP^2$ is a cover of degree $2$ branched over a generic sextic, then $\Pic(X)\simeq \Pic(\PP^2)$, by \cite{Bu}. 
	In particular, there is no elliptic fibrations on $X$ and condition $(2)$ in theorem A holds  for $L=f^*(O_{\PP^2}(m+2))$. However, by Proposition~\ref{prop:easier direction},  
	the pair $(X,L)$ satisfies $(N_{2m-1})$ but not $(N_{2m})$. 
\end{remark}

\begin{proof}[Proof of Theorem~\ref{thm:gle}]
	In both cases the implication $(1)\Longrightarrow (2)$ in Theorem~A is proved under less restrictive hypotheses in  Proposition~\ref{prop:easier direction}. Now we address the converse implication.

        $(a)$ Suppose that $(X,L)$ does not have property $N_p$. Since $L$ is ample, we know from Remark \ref{notample} that there exists a smooth curve $C\in |L|$ with  $\operatorname{Cliff}(C) = \operatorname{gon}(C)-2$. Then Corollary \ref{cor:AF} implies that $\operatorname{Cliff}(C)\leq p$ which is the same as  $\operatorname{gon}(C) \leq p+2$. Now, we could proceed in various ways: for example it is easy to conclude the proof using a  result  of Green--Lazarsfeld from  \cite{GL86}, or Knutsen's  \cite{Kn01}*{Theorem 1.3}, but we would like to use an extension result due to Serrano, since in principle it could be applied also on other surfaces, whereas the other results we mentioned are specific to the K3 case.

  So, by definition of gonality, there exists a map $f\colon C \to \mathbb{P}^1$ of degree $\deg f\leq p+2$. Then \cite{Se87}*{Theorem 3.1} implies that $f$ extends to a morphism $\overline{f}:X\rightarrow \PP^1$: the general fiber $F$ of $\overline{f}$ is a smooth elliptic curve and by construction we have that $(L\cdot F) = (C\cdot F) = \operatorname{deg} f \leq p+2$.

  $(b)$ As before, if $(X,L)$ does not have property $N_p$, then there is a smooth curve $C\in |L|$ with a map $f\colon C \rightarrow \PP^1$ with degree $d=\operatorname{gon}(C) \leq p+2$. The difference from case $(a)$ is that due to the weaker numerical constraint, \cite{Se87}*{Theorem 3.1} does not apply. Instead, we will make use of another result by Serrano \cite{Se87} that requires some extra work.
        Let $x\in\PP^1$ be a general point, then  the fiber $f^{-1}(x)=\{P_1^x, \ldots , P_d^x\}$ consists of $d$ 
	distinct points. In the course of the proof of \cite{Se87}, Serrano shows that there exists a non-trivial effective divisor $F_x$ and a $\QQ$-effective $P_x$ on $X$ with $C=F_x+P_x$ 
	satisfying
	\[
	0\leq a\deq (F_x^2) \ <\ b\deq (P_x^2), \qquad a \dleq d \dleq (p+2), \qquad  0<e\deq (F_x\cdot P_x) \leq d \ ,
	\]
	and  $P^x_1,\ldots ,P^x_e\subseteq C\cap F_x$. Moreover,	since  $H^1(X,\sO_X)=0$, it follows from  \cite{Se87}*{Theorem~3.9} that the effective divisor $F_x$ can be chosen to be irreducible and that infinitely many of them are in the same linear equivalence class, as $x\in \PP^1$ vare
	and to remain  in the same linear series while  $x\in\PP^1$ varies. In particular $F_x$ is globally generated by \cite{SD}*{Theorem~ 3.1}, and we can assume that $F_x$ is smooth. 

        Then, if $a=0$ we see that $F_x$ is a smooth elliptic curve such that $(F_x \cdot L) = (F_x^2)+(F_X\cdot P_x) \leq p+2$. If instead $a>0$, then the Hodge index theorem implies that $e^2 \geq ab$, thus by our hypotheses we get
	\[
  \frac{1}{4}(p+6)^2 <	(C^2) \equ a+b+2e  \dleq a+\frac{e^2}{a} + 2e \dleq a+\frac{d^2}{a}+2d \dleq a+\frac{(p+2)^2}{a}+2(p+2)  .
	\]
Now we observe that $a\leq d \leq (p+2)$ and  the function $f(a)=a+(p+2)^2/a+2(p+2)$ is decreasing for $0<a\leq (p+2)$. Then, the above inequality implies $a\leq 3$ and since the intersection form on a K3 surface is even it must be $(F_x^2)=2$. Then $F_x$ is a smooth genus two curve such that $(C\cdot F_x)\leq p+4$.
\end{proof}

Using this result it is straightforward to give a characterization of property $(N_p)$ for ample line bundles of type $L^{\otimes m}$ with $m\geq p$, in the spirit of Mukai's conjecture. In particular this gives an  extension of results of Gallego and Purnaprajina \cite{GP00}.

\begin{corollary}
  Let $X$ be a K3 surface and $L$ an ample line bundle, then $(p+3)\cdot L$ has property $(N_p)$.
\end{corollary}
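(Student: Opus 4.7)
The plan is to apply Theorem~\ref{thm:gle}(a) with $M \deq (p+3)L$ in place of $L$. This reduces the corollary to checking three things: that $M$ is ample and globally generated, that $(M^2) > \tfrac{1}{2}(p+4)^2$, and that no smooth elliptic curve $F \subseteq X$ with $(F^2)=0$ satisfies $1 \leq (M\cdot F) \leq p+2$.

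The first two items are essentially formal. For global generation, note that $p+3 \geq 3$, so by the classical result of Mayer/Saint-Donat any such multiple of an ample line bundle on a K3 surface is very ample. For the numerical bound, the intersection form on a K3 surface is even, so ampleness of $L$ forces $(L^2) \geq 2$; hence
\[
(M^2) \equ (p+3)^2 (L^2) \dgeq 2(p+3)^2 ,
\]
and the desired inequality $2(p+3)^2 > \tfrac{1}{2}(p+4)^2$ rearranges to $2(p+3) > p+4$, which holds for every $p \geq 0$.

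The substantive point is the elliptic curve condition, and this is exactly what the multiplier is engineered to produce. If a smooth elliptic curve $F$ satisfied $1 \leq (M\cdot F) \leq p+2$, then writing $(M\cdot F) = (p+3)(L\cdot F)$ together with $(L\cdot F) \geq 1$ (since $L$ is ample) forces $(M\cdot F) \geq p+3$, contradicting the upper bound. Thus every hypothesis of Theorem~\ref{thm:gle}(a) is in place for $(X, M)$, and we conclude that $(p+3)L$ has property $(N_p)$.

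There is no real obstacle here: the whole content of the corollary is the observation that multiplying by $p+3$ is just large enough to clear the numerical threshold of Theorem~\ref{thm:gle}(a) while simultaneously rendering the forbidden elliptic curves impossible by ampleness. The one point I would check carefully is global generation of $(p+3)L$, since Theorem~\ref{thm:gle} is phrased for ample $L$ but its proof invokes Corollary~\ref{cor:AF}, which wants $L$ globally generated; Mayer's theorem handles this uniformly for every $p \geq 0$.
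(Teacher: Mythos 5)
Your proposal is correct and follows essentially the same route as the paper: apply Theorem~\ref{thm:gle}(a) to $M=(p+3)L$, cite Saint-Donat for global generation, and observe that $(M\cdot E)=(p+3)(L\cdot E)\geq p+3$ rules out the forbidden elliptic curves. You simply make explicit the numerical check $(M^2)\geq 2(p+3)^2>\tfrac12(p+4)^2$ that the paper leaves implicit.
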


\begin{proof}
 Let $M=(p+3)\cdot L $: then $M$ is globally generated \cite{SD} and $(M^2)=(p+3)^2(L^2)$ satisfies the condition of Theorem \ref{thm:gle} (a). Hence $M$ fails to have property $(N_p)$ if and only if there exists a smooth elliptic curve $E\subseteq X$ such that $(E\cdot M) \leq p+2$, but $(E\cdot M) = (p+3)(L\cdot E) \geq p+3$.
\end{proof}

For simplicity of exposition, in the next cases we  restrict ourselves to the case when $p\geq 2$, since properties $(N_0)$ and $(N_1)$ are taken care of by Saint-Donat's Theorem \cite{SD}.

\begin{corollary}
  Let $X$ be a K3 surface, $L$ an ample line bundle and $p\geq 2$ an integer. Then $(p+2)\cdot L$ has property $(N_p)$ if and only if there is no smooth elliptic curve $E\subseteq X$ such that $(L\cdot E)=1$.
\end{corollary}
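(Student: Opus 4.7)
The plan is to apply Theorem~\ref{thm:gle}(a) to the line bundle $M := (p+2) L$ and then translate the elliptic curve condition for $M$ back to one for $L$. First, I would verify the hypotheses. By Saint-Donat \cite{SD}, any positive multiple $mL$ with $m \geq 2$ of an ample bundle on a K3 surface is base-point free, so $M$ is globally generated. Next, since $L$ is ample and the intersection form on a K3 surface is even, $(L^2) \geq 2$, and therefore
\[
 (M^2) \equ (p+2)^2 (L^2) \dgeq 2(p+2)^2.
\]
To check the numerical hypothesis of Theorem~\ref{thm:gle}(a), I would show $2(p+2)^2 > \tfrac12 (p+4)^2$; this is equivalent to $2(p+2) > p+4$, i.e.\ $p > 0$, which holds since $p \geq 2$.

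Theorem~\ref{thm:gle}(a) then tells us that $(X,M)$ satisfies $(N_p)$ if and only if there is no smooth elliptic curve $F \subseteq X$ with $1 \leq (M\cdot F) \leq p+2$ and $(F^2)=0$. The condition $(F^2)=0$ is automatic for any smooth elliptic curve on a K3 by adjunction, so it drops out. The second step is the translation: because $L$ is ample and $F$ is an irreducible curve, $(L\cdot F) \geq 1$, hence
\[
 (M \cdot F) \equ (p+2)(L \cdot F) \dgeq p+2,
\]
with equality iff $(L \cdot F) = 1$. Therefore the only way $(M\cdot F) \leq p+2$ can hold is $(L \cdot F) = 1$, and conversely $(L\cdot F)=1$ gives $(M \cdot F) = p+2 \in [1,p+2]$.

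Combining these observations, the forbidden configuration for $M$ is precisely the existence of a smooth elliptic curve $F \subseteq X$ with $(L \cdot F) = 1$, which yields the stated equivalence. There is no real obstacle: the argument is essentially a direct application of Theorem~\ref{thm:gle}(a), with the only bookkeeping being the numerical check $(M^2) > \tfrac12(p+4)^2$ and the observation that $(p+2) \mid (M \cdot F)$ forces the interval $[1,p+2]$ to pin down $(L \cdot F) = 1$.
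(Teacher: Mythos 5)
Your proof is correct and follows essentially the same route as the paper: both apply Theorem~\ref{thm:gle}(a) to $M=(p+2)L$ after noting global generation via \cite{SD}, verify the numerical hypothesis using $(L^2)\geq 2$, and observe that $(M\cdot F)=(p+2)(L\cdot F)\leq p+2$ forces $(L\cdot F)=1$. Your write-up just makes the numerical check and the divisibility bookkeeping more explicit than the paper does.
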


\begin{proof}
Let $M=(p+2)\cdot L$: then $M$ is globally generated \cite{SD} and we see that the self-intersection $(M^2)=(p+2)^2(L^2)$ satisfies the condition of  Theorem \ref{thm:gle} (a). Hence $M$ fails to have property $(N_p)$ if and only if there exists an elliptic curve $E\subseteq X$ such that $(E\cdot M) \leq p+2$, which is equivalent to $(E\cdot L)=1$.
\end{proof}

\begin{corollary}
  Let $X$ be a K3 surface, $L$ an ample line bundle and $p\geq 2$ an integer. If $(L^2)\geq 4$, or $(L^2)=2$ and $p>2$, then $(p+1)\cdot L$ has property $(N_p)$ if and only if there is no smooth elliptic curve $E\subseteq X$ such that $(E\cdot L)=1$.
 Moreover, if $(L^2)=2$, then $3\cdot L$ does not have property $(N_2)$.
\end{corollary}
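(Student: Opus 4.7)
The plan is to apply Theorem~\ref{thm:gle}(a) to the line bundle $M=(p+1)L$ for the main equivalence, and for the final claim that $3L$ fails $(N_2)$ when $(L^2)=2$ to invoke Proposition~\ref{thm:one direction}.

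First I would note that $M$ is ample and, by Saint-Donat, globally generated, with $(M^2)=(p+1)^2(L^2)$. The numerical hypothesis $(M^2)>\tfrac{1}{2}(p+4)^2$ of Theorem~\ref{thm:gle}(a) becomes $8(p+1)^2>(p+4)^2$ in the case $(L^2)\geq 4$, which holds for all $p\geq 1$, and $4(p+1)^2>(p+4)^2$ in the case $(L^2)=2$, which is equivalent to $p>2$. So in both of the allowed regimes the hypothesis is met, and Theorem~\ref{thm:gle}(a) reduces the problem to the non-existence of a smooth elliptic curve $E\subseteq X$ with $1\leq (M\cdot E)\leq p+2$.

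Next I would translate this elliptic-curve condition back to $L$: since $L$ is ample, $(L\cdot E)\geq 1$, while $(M\cdot E)=(p+1)(L\cdot E)$, so the bound $(M\cdot E)\leq p+2$ forces $(L\cdot E)\leq (p+2)/(p+1)<2$, hence $(L\cdot E)=1$; conversely $(L\cdot E)=1$ yields $(M\cdot E)=p+1\leq p+2$. This gives the stated equivalence.

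For the ``moreover'', take $(L^2)=2$, $M=3L$, and $p=2$; the threshold in Theorem~\ref{thm:gle}(a) just fails at equality, since $(M^2)=18=\tfrac{1}{2}(p+4)^2$, so I would exhibit an obstruction directly. Because $L$ is ample and globally generated with $(L^2)=2$, by Bertini a general $C\in|L|$ is a smooth curve of arithmetic genus $1+(L^2)/2=2$. Proposition~\ref{thm:one direction}, applied to $M=3L$ with $F=C$, then yields the conclusion: the hypothesis $(M^2)=18\geq 3p+6=12$ holds, and $(M\cdot F)=3(L^2)=6=p+4$, so $(X,M)$ fails $(N_2)$. The only real ``obstacle'' worth pointing out is that in the boundary case $(L^2)=2,\,p=2$ the numerical threshold of Theorem~\ref{thm:gle}(a) barely fails, so one is forced to invoke the sharper negative result of Proposition~\ref{thm:one direction} via the smooth genus-two sections of $|L|$.
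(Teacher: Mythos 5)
Your treatment of the main equivalence is correct and is essentially the paper's own argument: check that $M=(p+1)L$ is globally generated and that $(M^2)=(p+1)^2(L^2)$ clears the threshold $\tfrac12(p+4)^2$ of Theorem~\ref{thm:gle}(a) exactly in the stated regimes, then observe that $(M\cdot E)\leq p+2$ for an elliptic curve is equivalent to $(L\cdot E)=1$. For the ``moreover'' you take a genuinely different route from the paper. The paper handles $(L^2)=2$, $p=2$ by passing to a smooth curve $C\in|3L|$ via Lemma~\ref{lem:Koszul} and Corollary~\ref{cor:AF}, and then bounding $\Cliff(C)\leq 2$ directly: restricting $L$ to $C$ through the sequence $0\to\sO_X(-2L)\to\sO_X(L)\to\sO_C(L)\to 0$ produces a line bundle of degree $6$ on $C$ with $h^0=3$ and $h^1=6$, whence $\Cliff(C)\leq 6-6+2=2$ and Green's conjecture (Aprodu--Farkas) kills $(N_2)$. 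You instead invoke Proposition~\ref{thm:one direction} with the polarization $M=3L$ and the smooth genus-two curve $F\in|L|$ (which exists by Bertini since $|L|$ is base-point free for an ample $L$ with $(L^2)=2$ on a K3), noting $(M^2)=18\geq 12$ and $(M\cdot F)=6=p+4$; this is a legitimate application and yields the same conclusion via the hyperellipticity of $F$ and Green--Lazarsfeld. The paper's computation is more self-contained and pins down the actual Clifford index witness on the curve $C\in|3L|$ itself; your version is shorter and reuses the stability machinery of Proposition~\ref{thm:one direction}, at the cost of depending on that proposition's heavier ingredients (stability of the syzygy bundle). Both are valid.
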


\begin{proof}
 Suppose $(L^2)\geq 4$ or $(L^2)=2$ and $p>2$. Then $M=(p+1)\cdot L$ is globally generated \cite{SD}, and $(M^2)$ satisfies the condition of Theorem \ref{thm:gle} (a). Hence $M$ fails property $(N_p)$ if and only if there is a smooth elliptic curve $E\subseteq X$ such that $(M\cdot E) \leq p+2$. But since $p>0$, this is equivalent to $(L\cdot E)=1$.
  Now, suppose that $(L^2)=2$:  we need to show that $3\cdot L$ does not have property $N_2$. Thanks to Lemma \ref{lem:Koszul} and Corollary \ref{cor:AF} this is equivalent to showing that for a smooth curve $C\in |3\cdot L|$ we have $\operatorname{Cliff}(C)\leq 2$. So, let $C$ be such a curve and consider the exact sequence
    \[ 0 \to \sO_X(-2L) \to \sO_X(L) \to \sO_C(L) \to 0. \]
  Taking cohomology we see that $h^0(C,\sO_C(L)) = h^0(X,L) =3$ and $h^1(C,\sO_C(L)) = h^2(X,2L) = 6$, so that $\operatorname{Cliff}(C) \leq (C\cdot L) - 2h^0(C,\sO_C(L)) + 2 = 3(L^2) -4 = 6-4 =2$.   
\end{proof}

\begin{corollary}
  Let $X$ be a K3 surface and $L$ an ample line bundle and $p\geq 2$ an integer.
  \begin{enumerate}
  \item if $(L^2)\geq 4$ and $p>2$, or $(L^2)=2$ and $p>4$, then $p\cdot L$ has property $(N_p)$ if and only if there is no smooth elliptic curve $E\subseteq X$ such that $(E\cdot L)=1$.
  \item if $(L^2)\geq 6$ then $2\cdot L$ has property $(N_2)$ if and only if there is no smooth elliptic curve $E\subseteq X$ such that $(E\cdot L)\leq 2$.
  \item in the other cases $p\cdot L$ does not have property $(N_p)$.
  \end{enumerate}
\end{corollary}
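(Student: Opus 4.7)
The plan is to handle parts (1) and (2) as direct applications of Theorem~\ref{thm:gle}(a), and to exhaust the remaining corner cases in part~(3) via an explicit computation of Clifford index on a smooth hyperplane section. Throughout, set $M \deq p\cdot L$, which is globally generated by Saint-Donat's theorem since $p\geq 2$.

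For parts~(1) and (2), I would first verify that the numerical hypothesis $(M^2)=p^2(L^2)>\tfrac{1}{2}(p+4)^2$ of Theorem~\ref{thm:gle}(a) is met in each sub-case; this is a short algebraic check (both $(L^2)\geq 4, p>2$ and $(L^2)=2, p>4$ easily imply it, as does $(L^2)\geq 6, p=2$). Theorem~\ref{thm:gle}(a) then equates failure of $(N_p)$ for $(X,M)$ with the existence of a smooth elliptic curve $E\subseteq X$ satisfying $1\leq (E\cdot M) = p(E\cdot L)\leq p+2$. In part~(1), since $p\geq 3$ the only integer solution is $(E\cdot L)=1$; in part~(2), with $p=2$, the condition translates to $(E\cdot L)\leq 2$.

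For part~(3), the excluded configurations are exactly $(L^2)=4, p=2$ and $(L^2)=2, p\in\{2,3,4\}$, and in each of these $(M^2)\leq \tfrac12(p+4)^2$, so Theorem~\ref{thm:gle}(a) is unavailable. Here I would invoke Corollary~\ref{cor:AF} directly: if some smooth $C\in|M|$ carries a line bundle $A$ with $h^0(A),h^1(A)\geq 2$ and $\operatorname{Cliff}(A)\leq p$, then $\operatorname{Cliff}(C)\leq p$ and $M$ fails $(N_p)$. The natural candidate is $A=\sO_C(L)$; from the exact sequence
\[
0 \to \sO_X((1-p)L) \to \sO_X(L) \to \sO_C(L) \to 0
\]
together with Kodaira vanishing on the K3 (applied to $(p-1)L$ and $L$, using $K_X=0$), one obtains $h^0(C,\sO_C(L))=h^0(X,L)=(L^2)/2+2$, whence $\operatorname{Cliff}(\sO_C(L)) = (p-1)(L^2)-2$. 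Plugging in the four remaining pairs $((L^2),p)$ gives Clifford values $2,0,2,4$ respectively, all $\leq p$, while a Riemann--Roch computation on the large-genus curve $g(C)=1+p^2(L^2)/2$ confirms $h^1(\sO_C(L))\geq 2$ throughout.

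The main technical point is in part~(3): one must be careful that the non-strict inequality $\operatorname{Cliff}(\sO_C(L))\leq p$ (with equality in the borderline pairs $(4,2)$ and $(2,4)$) is enough, which is fine because $\operatorname{Cliff}(C)$ is a minimum and Corollary~\ref{cor:AF} requires only $\operatorname{Cliff}(C)\leq p$. Beyond that, the argument is essentially routine bookkeeping.
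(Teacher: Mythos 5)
Your proposal is correct and follows essentially the same route as the paper: parts (1) and (2) are the same direct application of Theorem~\ref{thm:gle}(a) after checking $p^2(L^2)>\tfrac12(p+4)^2$, and your part (3) spells out exactly the argument the paper invokes by reference to the preceding corollary, namely bounding $\operatorname{Cliff}(C)$ for $C\in|pL|$ via the restriction $\sO_C(L)$ and the sequence $0\to\sO_X((1-p)L)\to\sO_X(L)\to\sO_C(L)\to 0$. The numerical checks (the four leftover pairs, the values $(p-1)(L^2)-2$, and the verification that $h^0,h^1\geq 2$) are all accurate.
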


\begin{proof}
  \begin{enumerate}
  \item Set $M=p\cdot L$. Then $M$ is globally generated \cite{SD} and $(M^2)$ satisfies the condition of Theorem \ref{thm:gle} (a). Hence $M$ fails property $N_p$ if and only if there is a smooth elliptic curve $E\subseteq X$ such that $(E\cdot M) \leq p+2$. But since $p>2$, this is equivalent to $(E\cdot L)=1$.
  \item Set $M=2\cdot L$. Then $M$ is globally generated \cite{SD} and $(M^2)$ satisfies the condition of Theorem \ref{thm:gle} (a). Hence $M$ fails property $N_2$ if and only if there is a smooth elliptic curve $E\subseteq X$ such that $(E\cdot M) \leq 4$, which is equivalent to $(E\cdot L)\leq 2$.
  \item The remaining cases are when $(L^2)=4$ and $p=2$ or when $(L^2)=2$ and $p=2,3,4$. In all these cases, we can show that $p\cdot L$ does not have property $(N_p)$ reasoning as in the previous Corollary.
  \end{enumerate}
\end{proof}

\begin{proof}[Proof of Theorem A]
The proof is an immediate consequence of Proposition~\ref{prop:seshadri}, Theorem~\ref{thm:one direction}, and 
Theorem~\ref{thm:gle} in the K3, and \cite{KL15}*{Theorem 1.1} in the case of abelian surfaces.
\end{proof}

\section{Enriques surfaces}

We give here a result analogous to Theorem~A on Enriques surfaces. To do this, we will  assume  the Green--Lazarsfeld secant conjecture for certain curves on Enriques surfaces, which we state for the sake of reference. 

\begin{conjecture}[Green--Lazarsfeld secant conjecture]\label{conj:GL secant}
Let $C$ be a smooth curve of genus $g$, $L$ a line bundle of degree $\deg L \geq 2g+1+p -2h^1(C,L)-\Cliff(C)$. Then $L$ has property $(N_p)$ if and only if it is $(p+1)$-very ample.
\end{conjecture}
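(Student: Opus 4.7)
The final statement is the Green--Lazarsfeld secant conjecture, which is famously open in full generality, so the purpose of this plan is to sketch the known easy direction, outline a reasonable strategy for the hard direction, and identify precisely where the difficulty lies. Throughout write $r = h^0(C,L)-1$, so the complete linear series defines $\varphi_{|L|}\colon C\to\PP^r$, and note that $(p+1)$-very ampleness of $L$ is equivalent to saying that every length-$(p+2)$ subscheme $Z\subseteq C$ spans a $\PP^{p+1}$ under $\varphi_{|L|}$.

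\textbf{Easy direction, $(N_p)\Rightarrow(p+1)$-very ample.} Suppose that some length-$(p+2)$ subscheme $Z\subseteq C$ imposes dependent conditions on $|L|$, that is $\dim\langle Z\rangle \leq p$; this is what Green--Lazarsfeld call a $(p+2)$-secant $p$-plane. The plan is to invoke the classical ``secant implies syzygy'' construction: one uses the Koszul complex and the failure of evaluation on $Z$ to produce a nonzero class in $K_{p,1}(C,L)$. Under the degree bound $\deg L \geq 2g+1+p-2h^1(C,L)-\Cliff(C)$, this class survives as a minimal generator of the ideal and so obstructs $(N_p)$. This implication is in fact proved in complete generality by Green--Lazarsfeld, so here one would simply quote their theorem.

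\textbf{Hard direction, $(p+1)$-very ample $\Rightarrow (N_p)$.} The goal is to show $K_{p,2}(C,L)=0$. The natural plan is to apply Green's duality theorem to rewrite
\[
K_{p,2}(C,L)^{\vee} \;\cong\; K_{h^0(C,L)-p-2,\,0}(C, K_C, L),
\]
and then prove the right-hand side vanishes. One would analyse $K_{\bullet,0}(C,K_C,L)$ via the filtration by effective divisors in $|L|$, interpreting nonvanishing Koszul classes as particular secant configurations of $|K_C|$. Property $(p+1)$-very ampleness of $L$ is exactly what forbids the crude secant obstructions, while the degree term involving $\Cliff(C)$ is designed to rule out the subtler ``scrollar'' obstructions arising from low-degree pencils on $C$.

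\textbf{Main obstacle.} The hard direction is open in essentially the full stated range; complete proofs exist only in special situations such as canonical curves (Voisin, via Green's conjecture) and certain maximal-gonality or low-$p$ regimes (Farkas--Kemeny and others). For the purposes of the present paper this is not an impediment, because the conjecture is only assumed as a hypothesis in Theorem~B, where the surface-level equivalence is obtained by restricting via Lemma~\ref{lem:surfacetocurve} to a general curve $C\in|L|$, verifying that the numerical hypothesis $\deg L|_C \geq 2g(C)+1+p-2h^1(C,L|_C)-\Cliff(C)$ is implied by $(L^2)>4(p+2)^2$ on the Enriques surface $X$, and then applying the conjecture on $C$.
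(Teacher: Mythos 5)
This statement is the Green--Lazarsfeld secant conjecture, which the paper records only as a hypothesis (it is assumed for a general curve in $|L|$ in Theorem~\ref{thm:Enriques}) and does not prove; no proof exists in the stated generality. You correctly identify this, your account of the known direction (a $(p+2)$-secant $p$-plane obstructs $(N_p)$, due to Green--Lazarsfeld) is accurate, and your description of how the conjecture is deployed in Theorem B matches the paper, with the minor caveat that the reduction to the curve there goes through the Lefschetz-type argument of Lemma~\ref{lem:Koszul} (surjectivity of the restriction maps on sections) rather than Proposition~\ref{lem:surfacetocurve}.
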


On  Enriques surfaces a result of Knutsen and Lopez \cite{knutsenlopez} gives us some control over the Clifford index. In order to state their result, we introduce some notation. Let $L$ be a globally generated 
line bundle on $X$, and set 
\begin{align*}
\phi(L) & \deq  \inf\{ |(F\cdot L)| \,\, \mid \,\, F\in \Pic (X), \,\, (F^2)=0, \,\, F \not\equiv 0 \} \\
\mu(L) & \deq  \inf\{ (B\cdot L)-2 \,\, \mid \,\, B\in \Pic (X), \,\, (B^2)=4, \,\, \phi(B)=2, \,\, B \not\equiv L \}\ .
\end{align*}

\begin{theorem}[\cite{knutsenlopez}, Corollary 1.2]\label{thm:KnutLop}
Let $L$ be a globally generated line bundle on an Enriques surface $X$ and let $C$ be a general curve in $|L|$. Then
\[ 
\Cliff(C) \equ  \min\left\{ 2\phi(L)-2, \mu(L)-2, \left\lfloor \frac{(L^2)}{4} \right\rfloor \right\} 
\]
\end{theorem}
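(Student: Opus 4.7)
The plan is to prove the identity by establishing both an upper bound and a lower bound on $\Cliff(C)$. The upper bound comes from exhibiting, for each of the three quantities in the minimum, an explicit linear series on $C$ realizing it; the lower bound is the delicate part and uses Lazarsfeld--Mukai bundles on $X$.

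For the upper bound I would proceed as follows. First, if $F$ is an effective divisor on $X$ with $(F^2)=0$ and $(F\cdot L)=\phi(L)$, then $|2F|$ is a base-point-free genus-one pencil on $X$, and its restriction to $C$ is a pencil of degree $2\phi(L)$. Provided $(L^2)$ is large enough relative to $\phi(L)$, Riemann--Roch on $C$ gives $h^0 \geq 2$ and $h^1 \geq 2$ for this pencil, so it contributes at most $2\phi(L)-2$ to $\Cliff(C)$. Second, if $B$ satisfies $(B^2)=4$, $\phi(B)=2$, and $B\not\equiv L$, the restriction $\sO_C(B)$ has degree $(B\cdot L)=\mu(L)+2$ and, again thanks to Riemann--Roch and the fact that both $B$ and $L-B$ contribute sections, yields a linear series of Clifford index $\mu(L)-2$. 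Third, the bound $\lfloor (L^2)/4 \rfloor$ is Clifford's theorem applied to a curve of genus $g=(L^2)/2+1$.

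For the lower bound, I would argue by contradiction: assume there is a line bundle $A$ on $C$ with $h^0(A)\geq 2$, $h^1(A)\geq 2$, and
\[
\Cliff(A) \ < \ \min\Bigl\{ 2\phi(L)-2,\ \mu(L)-2,\ \bigl\lfloor (L^2)/4 \bigr\rfloor \Bigr\}.
\]
Attach to $(C,A)$ the Lazarsfeld--Mukai bundle $E_{C,A}$ on $X$ defined by
\[
0 \longrightarrow E_{C,A}^{\vee} \longrightarrow H^0(C,A) \otimes \sO_X \longrightarrow \iota_{*}A \longrightarrow 0,
\]
where $\iota\colon C\hookrightarrow X$ is the inclusion. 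Then $E_{C,A}$ is locally free, globally generated off $C$, with $c_1(E_{C,A})=L$ and $c_2(E_{C,A})=\deg A$; its discriminant $c_1^2-4c_2$ is controlled by $\Cliff(A)$ and, under the standing hypothesis, is strictly positive. A Bogomolov--Reider style argument, adapted to Enriques surfaces, then forces $E_{C,A}$ to be Mumford-unstable with respect to $L$, and produces a saturated destabilizing subsheaf coming from an effective divisor class $M$ on $X$. A numerical analysis based on the Hodge index theorem, generality of $C\in|L|$, and the definitions of $\phi(L)$ and $\mu(L)$ then forces $M$ to satisfy either $(M^2)=0$ (contradicting $\Cliff(A)<2\phi(L)-2$) or $(M^2)=4$ with $\phi(M)=2$ and $M\not\equiv L$ (contradicting $\Cliff(A)<\mu(L)-2$).

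The hardest step will be closing the numerical case analysis for the destabilizing class $M$ in the lower bound. On an Enriques surface the nontrivial $2$-torsion $K_X$ and the relative complexity of $\Pic(X)$ introduce exotic numerical configurations that are not present on K3 surfaces, so one must combine the generality of $C$ inside $|L|$, the structure of the $\phi$-stratification of the nef cone, and sharp intersection-theoretic bounds to eliminate all cases other than the two genuinely extremal ones. Handling the borderline case where $(L^2)/4$ is itself attained requires an additional argument that the Lazarsfeld--Mukai bundle is actually stable, so that no unexpected destabilization can occur.
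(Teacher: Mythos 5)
First, a point of reference: the paper does not prove this statement at all --- it is imported verbatim from \cite{knutsenlopez}*{Corollary 1.2} and used as a black box in the proof of Theorem~\ref{thm:Enriques}. So there is no in-paper argument to compare yours against; what follows is an assessment of your proposal on its own terms.

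Your upper-bound paragraph is essentially correct and standard: restricting the genus-one pencil $|2F|$, restricting $B$, and applying Clifford's theorem to a curve of genus $g=(L^2)/2+1$ give the three candidate values (modulo verifying $h^1\geq 2$ in each case, which is where hypotheses on the size of $(L^2)$ relative to $\phi(L)$ and $\mu(L)$ enter). The lower bound, however, has a genuine gap at its very first step. For $A$ on $C$ with $h^0(A)=r+1$, the Lazarsfeld--Mukai bundle $E_{C,A}$ has rank $r+1$, $c_1=L$, $c_2=\deg A$, and from $\Cliff(A)=\deg A-2r<\left\lfloor (L^2)/4\right\rfloor$ one only obtains $\deg A<(L^2)/4+2r$, hence
\[
c_1(E_{C,A})^2-4c_2(E_{C,A}) \;=\; (L^2)-4\deg A \;>\; -8r,
\]
which is not positivity: even for a pencil ($r=1$) this discriminant can be negative, and for $r\geq 2$ the rank-two discriminant $c_1^2-4c_2$ is not the invariant governing Bogomolov instability in any case (one would need $2(r+1)c_2-r\,c_1^2<0$, a strictly different and not automatically satisfied condition). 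So the assertion that a Bogomolov--Reider style argument ``forces $E_{C,A}$ to be Mumford-unstable'' does not follow from your standing hypothesis; this is precisely why the published proofs (on K3 surfaces, and a fortiori on Enriques surfaces, where $K_X\neq 0$ breaks the usual self-duality of Lazarsfeld--Mukai bundles) do not run this way, but instead reduce to pencils computing the gonality, analyse non-simple or decomposable Lazarsfeld--Mukai bundles, and then classify the resulting effective decompositions $L\sim M+N$. Both that reduction and the terminal case analysis pinning $M$ down to an isotropic class or to a class with $(M^2)=4$, $\phi(M)=2$ are asserted rather than argued in your proposal, and they constitute essentially all of the content of the theorem.
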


\begin{remark}
In the definition  of $\phi(L)$ we can actually suppose that $F$ is effective. Indeed, if $F$ is a divisor with $(F^2)=0$ then Riemann-Roch gives $h^0(F) - h^1(F) + h^0(K_X - F) = 1$, 
hence if $F$ is not effective then $K_X-F$ is effective, but then $|((K_X-F)\cdot L)| = |-(F\cdot L)| = |(F\cdot L)|$.
\end{remark}

\begin{theorem}\label{thm:Enriques}
Let $X$ be an Enriques surface and $L$ an ample and globally generated line bundle on $X$ such that $(L^2)>4(p+2)^2$.
Assume  that Conjecture~\ref{conj:GL secant} holds for a general curve in  $|L|$. Then the following are equivalent
\begin{enumerate}
\item $L$ has property $N_p$.
\item there is no effective and reduced divisor $F$ with $(F^2)=0$ such that $1\leq (F\cdot L)\leq p+2$.
\end{enumerate}  
\end{theorem}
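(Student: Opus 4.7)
$(1) \Rightarrow (2)$: Apply Proposition~\ref{prop:easier direction}. The key observation specific to Enriques surfaces is that for any effective reduced divisor $F$ with $(F^2)=0$, the relation $2K_X \sim 0$ forces $(K_X \cdot F)=0$, so adjunction yields $p_a(F) = 1$. Hence the hypothesis $p_a(F)\ge 1$ of Proposition~\ref{prop:easier direction} is automatic, and the existence of such an $F$ with $1\le (F\cdot L)\le p+2$ would preclude property $N_p$.

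$(2) \Rightarrow (1)$: The plan is to reduce to a general smooth curve $C\in|L|$ and invoke the secant conjecture. Since $H^1(X,\sO_X)=0$ and $H^1(X,L^q)=0$ for $q\ge 1$ (Kodaira vanishing, as $L^q - K_X$ is ample), the Lefschetz theorem for Koszul cohomology \cite{AN10}*{Theorem 2.20} gives $K_{p,q}(X,L) \cong K_{p,q}(C,\sO_C(L))$ for $q\ge 2$, reducing property $N_p$ for $(X,L)$ to property $N_p$ for $(C, \sO_C(L))$. Next I bound $\Cliff(C)$ via Theorem~\ref{thm:KnutLop}: condition (2) gives $\phi(L)\ge p+3$, hence $2\phi(L)-2 \ge p+3$; the bound $(L^2)>4(p+2)^2$ gives $\lfloor (L^2)/4\rfloor \ge (p+2)^2 \ge p+3$; and the Hodge index theorem applied to any $B\in\Pic(X)$ with $(B^2)=4$ yields $(B\cdot L)^2 \ge 4(L^2) > 16(p+2)^2$, so $\mu(L)-2 \ge p+3$. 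Therefore $\Cliff(C)\ge p+3$. By adjunction using $(K_X\cdot L)=0$, one has $g(C)=1+(L^2)/2$ and $\deg\sO_C(L)=2g-2$; the sequence $0 \to \sO_X \to L \to \sO_C(L) \to 0$ and the vanishings $H^1(X,\sO_X)=H^1(X,L)=H^2(X,\sO_X)=0$ give $h^1(C,\sO_C(L))=0$. The degree inequality of Conjecture~\ref{conj:GL secant} then simplifies exactly to $\Cliff(C)\ge p+3$, which holds. Under the assumed secant conjecture, $\sO_C(L)$ has $N_p$ iff it is $(p+1)$-very ample, so it remains to verify $(p+1)$-very ampleness.

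By Serre duality and Riemann--Roch on $C$, using $\omega_C = \sO_C(L)\otimes K_X|_C$, failure of $(p+1)$-very ampleness of $\sO_C(L)$ amounts to the existence of an effective $E\subset C$ of degree $p+2$ with $h^0(C, K_X|_C+E) \ge 1$; equivalently, of a pair $(E,E')$ of effective divisors of degree $p+2$ on $C$ with $E'-E \sim K_X|_C$, where $K_X|_C$ is a nontrivial $2$-torsion element (by Lefschetz for $\Pic$ on a general $C$). The main obstacle is to show that such a pair must come from an effective divisor $F$ on $X$ with $(F^2)=0$ and $1\le (F\cdot L)\le p+2$, contradicting (2). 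I expect this final step to proceed via a Serrano-type extension result adapted to the Enriques setting, combined with the structural fact that primitive effective divisors of square zero on an Enriques surface are half-fibers of elliptic pencils.
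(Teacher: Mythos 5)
Your architecture for $(2)\Rightarrow(1)$ --- reduce to a general $C\in|L|$, verify the degree hypothesis of the secant conjecture by showing $\Cliff(C)\geq p+3$ via Theorem~\ref{thm:KnutLop}, then check $(p+1)$-very ampleness --- is exactly the paper's, and the first direction, the $\mu(L)$ bound, and the $\lfloor (L^2)/4\rfloor$ bound are all fine. The fatal gap is the $(p+1)$-very ampleness step, which you leave at ``I expect this final step to proceed via a Serrano-type extension result.'' Since the secant conjecture is an equivalence, without this step you cannot conclude property $N_p$ at all; it is half the argument. The paper settles it with \cite{Kn01}*{Proposition 3.7}: if $Z\subseteq C$ is a subscheme of \emph{minimal} length $\ell\leq p+2$ failing to impose independent conditions on $\sO_C(L)$, that proposition produces an effective divisor $D$ on $X$ containing $Z$ with $(L\cdot D)\leq (D^2)+\ell\leq 2\ell$; since $D\supseteq Z$ forces $(L\cdot D)=(C\cdot D)\geq \ell$, one gets $(D^2)\geq 0$, the Hodge index theorem together with $(L^2)>4(p+2)^2$ excludes $(D^2)\geq 1$, and hence $(D^2)=0$ with $(L\cdot D)\leq \ell\leq p+2$, contradicting $(2)$. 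Your reformulation of the failure as a pair $E'-E\sim K_X|_C$ is correct but you give no argument producing a square-zero divisor on $X$ from it, and there is no reason to expect a Serrano-type extension theorem to apply here (those concern pencils on ample divisors, not secant configurations).

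There is a second, smaller gap: you assert that condition $(2)$ gives $\phi(L)\geq p+3$. Condition $(2)$ only excludes \emph{reduced} effective square-zero divisors, whereas $\phi(L)$ is an infimum over all square-zero classes (effective by the remark preceding the theorem, but not a priori reduced). The paper closes this with a dedicated argument: by Hodge index every effective subdivisor $F'\leq F$ satisfies $(F'^2)\leq 0$, so either some reduced subdivisor has square exactly zero and $L$-degree at most $(F\cdot L)$ --- contradicting $(2)$ --- or the support of $F$ consists of trees of $(-2)$-curves, in which case a direct computation gives $(F^2)<0$, contradicting $(F^2)=0$. The same reduced-versus-nonreduced reduction is needed once more for the divisor $D$ arising in the very-ampleness step. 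You should supply both of these; as written, the proposal proves only the degree half of the secant-conjecture hypothesis.
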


\begin{proof}
Assume first that there exists  an effective and reduced divisor $F$ with $(F^2)=0$ such that $1\leq (F\cdot L)\leq p+2$. Then we know that $L$ does not have  property $(N_p)$ by Proposition~\ref{lem:surfacetocurve}.

Conversely, suppose that there is no reduced effective divisor $F$ with $(F^2)=0$ and $1\leq (F\cdot L)\leq p+2$ and let $C$ be a general smooth curve in $|L|$. It is easy to see that the restriction maps
$H^0(X,qL) \to H^0(C,\sO_C(qL))$ are surjective for all $q\geq 0$, and then it follows  as in Lemma \ref{lem:Koszul} that $(X,L)$ has property $(N_p)$ if and only if $(C,\sO_C(L))$ 
does. We will assume the  Green--Lazarsfeld secant conjecture on $C$.

First we show that the line bundle $\sO_C(L)$ satisfies Green--Lazarsfeld condition on the degree. It is easy to check that $\sO_C(L) = \sO_C(K_C + \eta)$, where $\eta$ is torsion bundle of orderexactly two. In particular, if we write  $g$ for the genus of $C$, we have $\deg(\sO_C(L))=2g-2$ and $h^1(C,\sO_C(L))=0$. Hence, the Green--Lazarsfeld condition is equivalent to $\Cliff(C) \geq p+3$. 

Suppose now that $\Cliff(C) \leq p+2$. Since $(L^2) > 4(p+2)^2$, we observe that $\lfloor \frac{(L^2)}{4} \rfloor \geq (p+2)^2 > (p+2)$, hence, by  Theorem~\ref{thm:KnutLop}, 
there must exist either an effective and nontrivial divisor $F$ with $(F^2) = 0$ and $(F\cdot L) \leq \frac{p+4}{2} $, or a divisor $B$ with $(B^2)=4$ and $(L\cdot B) \leq p+4$. 
However, the second possibility does not happen  because if $B$ is a divisor with $(B^2) =4$ then
\[ 
(B\cdot L) \geq \sqrt{(B^2)} \sqrt{(L^2)} > 4 (p+2) \geq p+4 \ . 
\]
For the first possibility, suppose that there is an effective divisor such that $(F^2)=0$ and $(F\cdot L) \leq \frac{p+4}{2}$. Observe that $\frac{p+4}{2} \leq (p+2)$, so that if we can take 
$F$ to be reduced, we are done. First we observe that for every effective divisor $F'\leq F$, it must be that $(F'^2)\leq 0$: indeed, if $(F'^2)\geq 1$, then
\[
\left( \frac{p+4}{2} \right)^2 \geq (L\cdot F)^2 \geq (L\cdot F')^2 \geq (L^2)(F'^2) > 4(p+2)^2 \ ,
\]
which is impossible. Hence, if $F$ has a reduced subdivisor $F'\leq F$  with $(F'^2)\geq 0$, we are done. 

If this does not happen, then every connected component of the support of $F$ is a 
tree of smooth rational curves. We can clearly reduce to the case when the support is connected, so that we can write $F=n_1C_1+\dots+n_rC_r$ where the $n_i$ are positive integers and the $C_i$
are smooth rational curves with $(C_i^2)=-2$, and, if $j \ne i$ then $(C_i\cdot C_j) = 1$ if $j=i-1,i+2$, and $(C_i\cdot C_j)=0$ otherwise. Then we see that
\[ 
-(F^2) \equ  \sum_{i=1}^r 2n_i^2 - 2n_1n_2 - 2n_2n_3 + \dots - 2n_{r-1}n_r = n_1^2 + \sum_{i=1}^{r-1}(n_i-n_{i+1})^2 + n_r^2 > 0 \ ,
\]
so that $(F^2)= 0$ cannot happen. 

Next, we show that $\sO_C(L)$ is $(p+1)$-very ample. Suppose that it is not,  and let $Z\subseteq C$ be a zero-dimensional subscheme of minimum length $ \ell \leq p+2$ such that 
$H^0(C,\sO_C(L)) \to H^0(C,\sO_C(L)\otimes \sO_Z)$ is not surjective. In particular, since $L$ is globally generated, we must have  $\ell\geq 2$. Then, by \cite{Kn01}*{Proposition 3.7}, 
there exists an effective divisor $D$ on $X$, containing $Z$ for which  $(L\cdot D) \leq (D^2)+ \ell \leq 2\ell$. 

However since $D$ contains $Z$, we see that $(L\cdot D)=(C\cdot D) \geq \ell$, hence $(D^2)\geq 0$. Suppose that $(D^2)\geq 1$. Then we have
\[ 
4(p+2)^2 \geq 4\ell^2 \geq (L\cdot D)^2 \geq (L^2)(D^2) > 4(p+2)^2 \ ,
\]
which is impossible, so it must be $(D^2)=0$ and consequently,  $(L\cdot D) \leq \ell \leq p+2$. 

\end{proof}

\end{document}